\theoremstyle{plain}
\newtheorem{theorem}{Theorem}[section]
\newtheorem{proposition}[theorem]{Proposition}
\theoremstyle{definition}
\newtheorem{definition}[theorem]{Definition}
\newtheorem{assumption}[theorem]{Assumption}
\newtheorem{remark}[theorem]{Remark}
\newcommand{\keywords}[1]{\noindent\textbf{Keywords:} #1}
\title{A Convergent Algorithm Based on Deterministic Approximation for a Large Class of Regime-Switching Generalized Stochastic Game-Theoretic Riccati Differential Equations}
\author{Yiyuan Wang\thanks{Shandong University Zhongtai Securities Institute for Financial Studies, Shandong University, 27 Shanda Nanlu, Jinan, P.R. China, 250100 ({Email: wangyiyuan@mail.sdu.edu.cn}).}}
\date{}
\begin{document}
\maketitle

\begin{abstract}
This paper proposes a novel iterative algorithm to compute the stabilizing solution of regime-switching stochastic game-theoretic Riccati differential equations with periodic coefficients. The method decomposes the original complex stochastic problem into a sequence of deterministic subproblems. By sequentially solving for the minimal solutions of the Riccati differential equations in each subproblem, a sequence of matrix-valued functions is constructed. Leveraging the comparison theorem, the monotonicity, boundedness, and convergence of the iterative sequence are rigorously proven. Numerical experiments verifies algorithm effectiveness and stability. To the best of our knowledge, this is the first general computational approach developed for this class of problems.\\
\keywords{Iterative Algorithm; Generalized Stochastic Game-Theoretic Riccati Differential Equations; Stabilizing Solutions; Deterministic Approximation} \\
\end{abstract}

\section{Introduction} 
\label{sec:introduction}

Infinite-horizon zero-sum linear-quadratic stochastic differential games (ZSLQSDG) with Markovian regime-switching model adversarial decision-making under random environmental shifts—such as in financial markets or cyber-physical systems. The state dynamics follow an It\^o differential equation driven by multiplicative noise, with coefficients that switch according to a finite-state continuous-time Markov chain. This setup leads to a coupled system of generalized stochastic game-theoretic Riccati differential equations (GTRDEs)—one for each regime—interconnected through the Markov chain's transition rates. (See \cite*{Basar1999, Mou2006, Michael2006, Ungureanu2012,Dragan2020,Sun2020_book,Lv2020,Li2021,Liu2024,Li2024,Lv2025} and related references for in-depth discussions on relevant issues.)

In the infinite-horizon setting, the central object of interest is the stabilizing solution—a set of constant, positive semi-definite matrices that yield equilibrium feedback strategies ensuring both mean-square exponential stability and equilibrium performance. While existence and uniqueness are well established under standard assumptions (e.g., \cite{Dragan2020}), computing this stabilizing solution remains challenging. The system is highly nonlinear, globally coupled, and lacks natural boundary conditions, making classical initial- or terminal-value solvers unsuitable. Moreover, the two-layer iterative algorithms developed for specific configurations in works such as \cite*{Lanzon2008, Dragan2008, Feng2010, Dragan2011, Dragan201501, Dragan201502, Ivanov2015, Ivanov2018} are no longer applicable in this general case. 

\Cite{Dragan2017} proposed an iterative method based on a deterministic approximation to compute the stabilizing solution of a broad class of stochastic GTRDEs arising in $H_\infty$ control problems. \Cite{Aberkane2023} studied numerical algorithms for generalized Riccati difference equations that arise in linear-quadratic stochastic difference games. To the best of our knowledge, however, no existing algorithm is capable of fully computing the stabilizing solution of the coupled generalized time-varying Riccati equations associated with infinite-horizon zero-sum linear-quadratic differential games featuring regime-switching dynamics and multiplicative noise.

This paper fills this gap by proposing a novel numerical scheme that decomposes the stochastic problem into a sequence of deterministic subproblems. Under natural assumptions and for periodic coefficients, the algorithm proceeds by constructing a sequence of matrix-valued functions through the successive solution of minimal solutions to the associated deterministic GTRDEs at each step. Using the comparison theorem, we establish that the resulting iterative sequence is monotonic and bounded, and converging to the stabilizing solution of the original problem. Numerical experiments confirm the efficacy of the proposed approach.
Importantly, our method unifies and extends several classical settings: classical ZSLQSDG-type GTRDEs, $H_\infty$-type stochastic GTRDEs with regime-switching, and the more general GTRDEs corresponding arise in ZSLQSDGs with regime-switching. This offers a unified computational framework for a broad family of adversarial control problems under uncertainty.

Remaining content outline: Section \ref{sec:preliminary} covers infinite-horizon ZSLQSDG with regime-switching mathematical framework, parameter-decoupled auxiliary problems and some useful preliminary results. Section \ref{sec:main_results} presents main results: algorithm iteration process, associated deterministic problem formulation, proofs of algorithm convergence. Section \ref{sec:numerical_experiments} details numerical experiments.

\section{Preliminary} 
\label{sec:preliminary}

\subsection{Notation}
To facilitate reading, we summarize the key mathematical symbols used in this work.
\begin{itemize}
\item $\mathbb{R}_+$: Set of non-negative real numbers; $\mathbb{R}^{p \times q}$: Set of all $p \times q$ real matrices; $\mathbb{S}_p$: Set of all $p \times p$ real symmetric matrices; $\mathbb{S}^N_p$: Set of $N$-tuples of $p \times p$ real symmetric matrices, i.e., $\mathbb{S}^N_p = (\mathbb{S}_p, \dots, \mathbb{S}_p)$ (with $N$ components); $\mathbb{I}_p$: $p \times p$ identity matrix.
\item $\top$: Matrix transpose; $(\cdot)^{-1}$: Matrix inverse (for invertible matrices). $\det(\cdot)$: Matrix determinant; $\mathrm{sgn}(\cdot)$: Matrix sign (characterized by its inertia, i.e., the number of positive/negative eigenvalues).
\item If $A\in\mathbb{S}_{p+}$ (resp., $A\in\overline{\mathbb{S}}_{p+}$) is a symmetric positive definite (resp., symmetric positive semi-definite) matrix, we write $A\succ0$ (resp., $A\succeq0$). For any $A,B\in\mathbb{S}_p$, we use the notation $A\succ B$ (resp., $A\succeq B$) to indicate that $A - B\succ0$ (resp., $A - B\succeq0$).
\item $\mathbb{E}[\cdot]$: Expectation with respect to the probability measure $\mathbb{P}$; $\langle \cdot, \cdot \rangle$: The inner product on a Hilbert space.
\end{itemize}

\subsection{A Class of Regime-Switching Stochastic Game-Theoretic Riccati Differential Equations}
\label{subsec:gtrdes}

Consider the following system of nonlinear differential equations:
\begin{equation}
\label{pf:gtrde}
\begin{aligned}
&\frac{d}{dt}{X}(t,i) +A_0^{\top}(t,i)X(t,i) + X(t,i)A_0(t,i) + \sum_{k=1}^r A_k^{\top}(t,i)X(t,i)A_k(t,i) +\sum_{j=1}^N q_{ij}X(t,j) + M(t,i) \\
&- \left( X(t,i)B_0(t,i) + \sum_{k=1}^r A_k^{\top}(t,i)X(t,i)B_k(t,i) + L(t,i) \right) \left( R(t,i) + \sum_{k=1}^r B_k^{\top}(t,i)X(t,i)B_k(t,i) \right)^{-1} \\
&\quad \left( B_0^{\top}(t,i)X(t,i) + \sum_{k=1}^r B_k^{\top}(t,i)X(t,i)A_k(t,i) + L^{\top}(t,i) \right), t \geq 0 \, ; 1 \leq i \leq N,
\end{aligned}    
\end{equation}
with the unknown function $t \in \mathbb{R}_+ \mapsto \mathbb{X}(t) = (X(t,1), \ldots, X(t,N)) \in \mathbb{S}^N_n$.

We are interested in a class of Stochastic GTRDEs arising in ZSLQSDG with Regime-Switching. The admissible solution set for the Stochastic GTRDEs \eqref{pf:gtrde} consists of all mappings $\mathbb{X}(\cdot): \mathbb{R}_+ \to \mathbb{S}^n$ satisfying the following sign condition:
\begin{equation}
\label{pf:sign_conditions_1}
\mathrm{sgn}\left( R(t,i) + \sum_{k=1}^r B_k^{\top}(t,i)X(t,i)B_k(t,i) \right) = \mathrm{sgn} \, \mathrm{diag}\left(-I_{m_1}, I_{m_2}\right) , \quad \forall \, (t, i) \in  \mathbb{R}_{+} \times \mathfrak{N}, 
\end{equation}
where $\mathfrak{N} = \{1, 2, \ldots, N\}$ and $\mathrm{diag}\left(-I_{m_1}, I_{m_2}\right)$ is block diagonal matrix with $-I_{m_1}$ and $I_{m_2}$ as diagonal blocks. To ensure the problem is well-defined, we impose the following parameter constraints on the Stochastic GTRDEs \eqref{pf:gtrde}.

\begin{assumption}
\label{pf:ass_1}
\begin{enumerate}
    \,
    \item[(1)] In Stochastic GTRDEs \eqref{pf:gtrde}, $t \mapsto A_k(t,i) : \mathbb{R}_+ \to \mathbb{R}^{n \times n}; t \mapsto B_k(t,i) : \mathbb{R}_+ \to \mathbb{R}^{n \times m}(0 \leq k \leq r) \, , t \mapsto L(t,i) : \mathbb{R}_+ \to \mathbb{R}^{n \times m},\, t \mapsto M(t,i) : \mathbb{R}_+ \to \mathbb{S}_n,\, t \mapsto R(t,i) : \mathbb{R}_+ \to \mathbb{S}_m$ are continuous matrix-valued functions which are periodic with period $\theta > 0$.
    \item[(2)] The scalars $q_{ij}$ involved in Stochastic GTRDEs \eqref{pf:gtrde} have the properties
    \begin{equation}
    \label{pf:ass_12}
    q_{ij} \geq 0, \, i \neq j; \quad \sum_{j=1}^N q_{ij} = 0, \, 1 \leq i \leq N.
    \end{equation}
    \item[(3)]For all $(t,i) \in \mathbb{R}_+ \times \mathfrak{N}$, the following matrix inequalities hold:
    \begin{equation}
    \label{pf:ass_13}
    \begin{aligned}
    &R_{22}(t,i) \succ 0, M(t,i) - L_2(t,i) R_{22}(t,i)^{-1} L_2^{\top}(t,i) \succeq 0
    \end{aligned}
    \end{equation}
    where the block matrices are defined as:
    \[
    R(t,i) = \begin{pmatrix} R_{11}(t,i) & R_{12}(t,i) \\ R_{12}^{\top}(t,i) & R_{22}(t,i) \end{pmatrix}, \quad 
    L(t,i) = \begin{pmatrix} L_1(t,i) & L_2(t,i) \end{pmatrix},
    \]
    with $R_{l_1l_2}(t,i) \in \mathbb{R}^{m_{l_1} \times m_{l_2}} (l_1, l_2 = 1, 2)$ and $L_l(t,i) \in \mathbb{R}^{n \times m_l}(l = 1, 2)$. Here, $m_1, m_2 \geq 1$ and $m_1 + m_2 = m$ are given constants.
\end{enumerate}
\end{assumption}

\begin{remark}
\Cite{Dragan2020} established sufficient conditions guaranteeing the existence of a bounded, stabilizing solution to this class of Riccati differential equations, while further deriving its correspondence with ZSLQZSDG (see Section 3 of their work for details). The parameter configurations employed herein align with the assumptions put forth in \cite{Dragan2020}.
By leveraging Lemma 2 from Chapter 4 of \cite{Halanay1994}, the following statements are equivalent:
\begin{enumerate}
    \item[i.] The solution $\mathbb{X}(\cdot)$ satisfies the sign condition \ref{pf:sign_conditions_1};  
    \item[ii.] The solution $\mathbb{X}(\cdot)$ satisfies the sign conditions:  
    \begin{equation}
    \label{pf:sign_conditions_21}   
    \begin{aligned}
    &\mathbb{R}_{22}(t)[\mathbb{X}](i) = R_{22}(t,i) + \sum_{k=1}^{r} B_{k2}^{\top}(t,i) X(t,i) B_{k2}(t,i) \succ 0, \quad \forall \, (t,i) \in \mathbb{R}_{+} \times \mathfrak{N};
    \end{aligned}
    \end{equation}
    \begin{equation}
    \label{pf:sign_conditions_22}   
    \begin{aligned}
    &\mathbb{R}_{22}^{\sharp}(t)[\mathbb{X}](i) = R_{11}(t,i) + \sum_{k=1}^{r} B_{k1}^{\top}(t,i) X(t,i) B_{k1}(t,i) - \left( R_{12}(t,i) + \sum_{k=1}^{r} B_{k1}^{\top}(t,i) X(t,i) B_{k2}(t,i) \right) \\
    &\quad \times \mathbb{R}_{22}(t)[\mathbb{X}](i)^{-1} \left( R_{12}(t,i) + \sum_{k=1}^{r} B_{k1}^{\top}(t,i) X(t,i) B_{k2}(t,i) \right)^{\top} \prec 0, \quad \forall \, (t,i) \in \mathbb{R}_{+} \times \mathfrak{N},
    \end{aligned}
    \end{equation}
    where $\mathbb{X}(\cdot)$ denotes a bounded, $\theta$-periodic solution to the Stochastic GTRDEs \eqref{pf:gtrde} with $X(t,i)\in\overline{\mathbb{S}}_{n+}$ for all $(t,i) \in \mathbb{R}_{+} \times \mathfrak{N}$.
\end{enumerate}
\end{remark}

This type of Stochastic GTRDEs is derived from the following problem. Let $(\Omega, \mathcal{F}, \{\mathcal{F}_t\}_{t \geq 0}, \mathbb{P})$ denote a filtered probability space satisfying the usual conditions, which supports an $r$-dimensional standard Brownian motion $W(t) = (w_1(t), \ldots, w_r(t))^\top$ and a standard right-continuous Markov chain $\eta_t$ with finite state space $\mathfrak{N}$. The Markov chain has a transition semigroup $P(t) = e^{Qt}$ for $t \geq 0$, where $Q = (q_{ij})_{N \times N}$ is the generator matrix satisfying conditions \eqref{pf:ass_12}. Assume that $W(t)$ and $\eta_t$ are independent, and the initial distribution of $\eta_t$ satisfies $\mathbb{P}(\eta_0 = i) = \pi_0(i) > 0$ for every $i \in \mathfrak{N}$. The state of the system evolves according to the controlled linear stochastic differential equation on $[0, \infty)$:
\begin{equation}
\label{pf:lqzsdg_sde}
\begin{cases}
dx(t) = \left( A_0(t,\eta_t)x(t) + B_{01}(t,\eta_t)u_1(t) + B_{02}(t,\eta_t)u_2(t) \right) dt \\
\quad + \sum_{k=1}^{r} \left( A_k(t,\eta_t)x(t) + B_{k1}(t,\eta_t)u_1(t) + B_{k2}(t,\eta_t)u_2(t) \right) dw_k(t) \\
x(0) = x_0
\end{cases},
\end{equation}
where $x_0 \in \mathbb{R}^n$, and the control inputs $u_1(t) \in \mathbb{R}^{m_1 \times n}$ and $u_2(t) \in \mathbb{R}^{m_2 \times n}$ are defined as $\begin{pmatrix} u_1(\cdot) \\ u_2(\cdot) \end{pmatrix} = u(\cdot)$. For each $i \in \mathfrak{N}$ and $k \in \{0, 1, \dots, r\}$, the matrix $B_{kl}(\cdot, i) \in \mathbb{R}^{n \times m_l}(l = 1, 2)$ is given by
$B_k(\cdot, i) = \begin{pmatrix} B_{k1}(\cdot, i) & B_{k2}(\cdot, i) \end{pmatrix}.$

Player 1 and Player 2 share the same performance functional:
\begin{equation}
\label{pf:lqzsdg_pf}
    J(x_0;u_1,u_2)\triangleq\mathbb{E}\int_{0}^{\infty}\Bigg[\Bigg\langle\begin{pmatrix}
    M(t,\eta_t) & L_1(t,\eta_t)  & L_2(t,\eta_t) \\
    L_1^{\top}(t,\eta_t) & R_{11}(t,\eta_t) & R_{12}(t,\eta_t)\\
    L_2^{\top}(t,\eta_t) & R_{12}^{\top}(t,\eta_t) & R_{22}(t,\eta_t)
    \end{pmatrix}\begin{pmatrix}
    x_u(t)\\
    u_1(t)\\
    u_2(t)
    \end{pmatrix},\begin{pmatrix}
    x_u(t)\\
    u_1(t)\\
    u_2(t)
    \end{pmatrix}\Bigg\rangle
    \Bigg]dt,
\end{equation}
where $x_u(\cdot)$ is the solution of the initial-value problem \eqref{pf:lqzsdg_sde} determined by the input $u(\cdot)$. The class of $\mathcal{U}_{\text{adm}}(x_0)$ of the admissible controls consists of all stochastic processes, $u = \{u(t)\}_{t \geq 0} \in L_{\mathcal{\eta},w}^2(\mathbb{R}_+; \mathbb{R}^m)$ with the property that $x_u(\cdot) \in L_{\mathcal{\eta},w}^2(\mathbb{R}_+; \mathbb{R}^n)$ and $\lim_{t \to \infty} \mathbb{E}[|x_u(t)|^2 \big| \eta_0 = i] = 0$ for every $i \in \mathfrak{N}$. \footnote{For precise definition of the spaces $L_{\mathcal{\eta},w}^2(\mathbb{R}_+; \mathbb{R}^d)$ of stochastic processes adapted to the filtration generated by $\{(\eta_t, w(t))\}_{t \geq 0}$, refer to Chapter 1 of \cite{Dragan2013book}.}

In this zero-sum game, Player 1 (\textit{the maximizer}) selects control $u_1$ to maximize \eqref{pf:lqzsdg_pf}, while Player 2 (\textit{the minimizer}) chooses $u_2$ to minimize the same function. The problem is to find an admissible control pair $(\tilde{u}_1,\tilde{u}_2)$ such that
\[
J(x_0; u_1, \tilde{u}_2) \leq J(x_0; \tilde{u}_1, \tilde{u}_2) \leq J(x_0; \tilde{u}_1, u_2)
\]
holds for all admissible $(u_1, u_2)$.
To facilitate subsequent discussions, we define the core notation set related to the Stochastic GTRDEs \eqref{pf:gtrde} as $\boldsymbol{\Sigma}= (\hat{\mathbb{A}}(\cdot),\mathbb{B}_0(\cdot),\Pi_1(\cdot),\Pi_2(\cdot),\Pi_3(\cdot),\mathbb{Q}(\cdot))$ and its components as follows:  
\begin{itemize}
    \item For all $t \in \mathbb{R}_+ : \hat{\mathbb{A}}(t) =(\hat{A}(t,1), \ldots, \hat{A}(t,1))= (A_0(t,1)+\frac{1}{2}q_{11}\mathbb{I}_{n}, \ldots, A_0(t,N)+\frac{1}{2}q_{NN}\mathbb{I}_{n}) ,\, \mathbb{B}_0(t) = (B_0(t,1), \ldots, B_0(t,N)) $;
    \item For all $t \in \mathbb{R}_+$, the operator $\Pi_j(t)(j = 1, 2, 3)$ is defined on $S_n^N$ by 
    \[
    \Pi_j(t)[\mathbb{X}] = (\Pi_j(t)[\mathbb{X}](1), \ldots, \Pi_j(t)[\mathbb{X}](N))
    \]
    where for each $i \in \mathfrak{N}$:
    \[
    \begin{cases}
    \Pi_1(t)[\mathbb{X}](i) = \sum_{l=1, l \neq i}^N q_{il} X(l) + \sum_{k=1}^r A_k^{\top}(t, i) X(i) A_k(t, i) ; \\
    \Pi_2(t)[\mathbb{X}](i) = \sum_{k=1}^r A_k^{\top}(t, i) X(i) B_k(t, i) ; \\
    \Pi_3(t)[\mathbb{X}](i) = \sum_{k=1}^r B_k^{\top}(t, i) X(i) B_k(t, i) .
    \end{cases}
    \]
    \item For all $t \in \mathbb{R}_+$, let $\mathbb{Q}(t) = \left( Q(t,1),\ \ldots,\ Q(t,N) \right) \in S_{n+m}^N$, where each symmetric block is given by
    \[
    Q(t,i) = 
    \begin{pmatrix}
    M(t,i) & L(t,i) \\
    L^\top(t,i) & R(t,i)
    \end{pmatrix} \, \text{for each} \,\, i \in \mathfrak{N}.
    \]
\end{itemize}

\subsection{Some Useful Preliminary Results}

\begin{definition}
A global solution $\tilde{\mathbb{X}}(\cdot): \mathbb{R}_+ \to \mathbb{S}_n^N$ of the Stochastic GTRDEs \eqref{pf:gtrde} is called a \emph{stabilizing solution} if it satisfies the following two conditions:
\begin{enumerate}
    \item[(1)] For each $i \in \mathfrak{N}$,
    \[
    \inf_{t \in \mathbb{R}_+} | \det (R(t,i) + \sum_{k=1}^r B_k^{\top}(t,i)\tilde{X}(t,i)B_k(t,i)) | > 0;
    \]
    \item[(2)] The closed-loop system
    \[
    dx(t) = \left( A_0(t,\eta_t) + B_0(t,\eta_t) F_{\tilde{X}}(t,\eta_t) \right)x(t)\,dt + \sum_{k=1}^r \left( A_k(t,\eta_t) + B_k(t,\eta_t)F_{\tilde{X}}(t,\eta_t) \right)x(t)\,dw_k(t)
    \]
    is such that its zero solution is exponentially stable in mean square (ESMS). For conciseness, we say the system
    \[
    \left( \mathbb{A}_0(\cdot) + \mathbb{B}_0(\cdot)\mathbb{F_{\tilde{X}}}(\cdot), \dots, \mathbb{A}_r(\cdot) + \mathbb{B}_r(\cdot)\mathbb{F_{\tilde{X}}}(\cdot) \right)
    \]
    is stable. Here, the feedback operator $\mathbb{F_X}(\cdot) = (F_X(\cdot,1), \dots, F_X(\cdot,N))$ with $F_X(t,i)\in R^{m \times n} =$
    \[
    -( R(t,i) + \sum_{k=1}^r B_k^{\top}(t,i)X(t,i)B_k(t,i) )^{-1} ( B_0^{\top}(t,i)X(t,i) + \sum_{k=1}^r B_k^{\top}(t,i)X(t,i)A_k(t,i) + L^{\top}(t,i) ).
    \]
\end{enumerate}
\end{definition}

\begin{theorem}[\cite*{Dragan2020}]
\label{comparison_theorem}
Let $\mathbb{X}^2(\cdot) : I_2 \subseteq \mathbb{R}_+ \to S^N_n$ be a solution of the Stochastic GTRDEs \eqref{pf:gtrde} associated to the quintuple $ \Sigma^2 = (\mathbb{\hat{A}}_0(\cdot), \mathbb{B}_0(\cdot), \Pi_1(\cdot), \Pi_2(\cdot), \Pi_3(\cdot), \mathbb{Q}^2(\cdot))$ that verifies the sign condition \ref{pf:sign_conditions_21},\ref{pf:sign_conditions_22} and let $\mathbb{X}^1(\cdot) : I_1 \subseteq \mathbb{R}_+ \to S^N_n$ be a solution of the Stochastic GTRDEs \eqref{pf:gtrde} associated to the quintuple $ \Sigma^1 = (\mathbb{\hat{A}}_0(\cdot), \mathbb{B}_0(\cdot), \\\Pi_1(\cdot), \Pi_2(\cdot), \Pi_3(\cdot), \mathbb{Q}^1(\cdot))$ that verifies the sign condition \ref{pf:sign_conditions_21}.
Assume that $\mathbb{Q}^2(t,i) \succeq \mathbb{Q}^1(t,i)$ for all $t \in I_1 \cap I_2$ and $i \in \mathfrak{N}$, where 
\[
\mathbb{Q}^{u}(t,i) = \begin{pmatrix} M^{u}(t,i) & L^{u}(t,i) \\ L^{u\top}(t,i) & R^{u}(t,i) \end{pmatrix}, u = 1,2. 
\] 
If there exists $\tau \in I_1 \cap I_2$ such that $X^2(\tau,i) \succeq X^1(\tau,i)$ for each $i \in \mathfrak{N}$ , then $X^2(t,i) \succeq X^1(t,i)$ for all $t \in [0, \tau] \cap (I_1 \cap I_2)$ and $i \in \mathfrak{N}$.
\end{theorem}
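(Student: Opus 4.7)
My plan is to convert the Riccati comparison into a backward linear (Lyapunov-type) equation for $\mathbb{Y}(t) := \mathbb{X}^2(t) - \mathbb{X}^1(t)$, and to conclude via the standard positivity preservation of Lyapunov operators on $\mathbb{S}^N_n$. The central obstacle is that $R^u(t,i) + \Pi_3(t)[\mathbb{X}^u](i)$ has indefinite signature---$m_1$ negative and $m_2$ positive eigenvalues by \eqref{pf:sign_conditions_1}---so the usual completion of squares against either $\mathbb{X}^u$'s own feedback $\mathbb{F}_{\mathbb{X}^u}$ leaves a sign-indefinite quadratic remainder that cannot be absorbed into the Lyapunov operator. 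The idea for circumventing this is to introduce a carefully chosen \emph{mixed} common feedback $\bar{F}$---its maximizer block taken from $\mathbb{F}_{\mathbb{X}^1}$ and its minimizer block taken from $\mathbb{F}_{\mathbb{X}^2}$ up to a Schur-complement correction---so that the remainder becomes sign-definite for each Riccati equation, with signs that combine favorably when the two equations are subtracted.

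Concretely, for any common feedback $F$, completing the square in each Riccati identity $\mathcal{R}^u(\mathbb{X}^u) = 0$ gives the closed-loop form
\begin{equation*}
\dot{X}^u(t,i) + \mathcal{L}_F[\mathbb{X}^u](i) + \begin{pmatrix} I_n \\ F \end{pmatrix}^\top \mathbb{Q}^u(t,i) \begin{pmatrix} I_n \\ F \end{pmatrix} = (F - F^u)^\top \bigl(R^u(t,i) + \Pi_3(t)[\mathbb{X}^u](i)\bigr)(F - F^u),
\end{equation*}
where $F^u := \mathbb{F}_{\mathbb{X}^u}$ and $\mathcal{L}_F[\mathbb{X}](i) := (\hat{A} + B_0 F)^\top X(i) + X(i)(\hat{A} + B_0 F) + \sum_{k=1}^{r}(A_k + B_k F)^\top X(i)(A_k + B_k F) + \sum_{l \neq i} q_{il} X(l)$ is the Lyapunov-type operator on $\mathbb{S}^N_n$ for the closed-loop regime-switching system. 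Partitioning $F = (F_1^\top, F_2^\top)^\top$ conformally with the $(m_1,m_2)$ block structure, the Schur decomposition of the quadratic remainder with respect to the positive block $\mathbb{R}_{22}(t)[\mathbb{X}^u](i) \succ 0$ writes it as a $\succeq 0$ piece quadratic in $\tilde F_2 - \tilde F^u_2$ (with $\tilde F_2 := F_2 + \mathbb{R}_{22}^{-1}\mathbb{R}_{12}^\top F_1$ and $\mathbb{R}_{12}(t)[\mathbb{X}](i) := R_{12}(t,i) + \sum_{k=1}^r B_{k1}^\top(t,i) X(t,i) B_{k2}(t,i)$) plus a piece $(F_1 - F^u_1)^\top \mathbb{R}_{22}^{u,\sharp}(F_1 - F^u_1)$ whose sign depends on $\mathbb{R}_{22}^{u,\sharp}$. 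Setting
\begin{equation*}
\bar{F}_1 := F^1_1, \qquad \bar{F}_2 := F^2_2 + \mathbb{R}_{22}(t)[\mathbb{X}^2](i)^{-1}\mathbb{R}_{12}(t)[\mathbb{X}^2](i)^\top(F^2_1 - F^1_1)
\end{equation*}
annihilates the $(F_1 - F^1_1)$-block of the $u=1$ decomposition, so that $(\bar F - F^1)^\top(R^1 + \Pi_3[\mathbb{X}^1])(\bar F - F^1) \succeq 0$ using only \eqref{pf:sign_conditions_21} for $\mathbb{X}^1$; and annihilates the $(\tilde F_2 - \tilde F^2_2)$-block of the $u=2$ decomposition, so that $(\bar F - F^2)^\top(R^2 + \Pi_3[\mathbb{X}^2])(\bar F - F^2) = (F^1_1 - F^2_1)^\top \mathbb{R}_{22}^{2,\sharp}(F^1_1 - F^2_1) \preceq 0$ by \eqref{pf:sign_conditions_22} for $\mathbb{X}^2$.

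Subtracting the two closed-loop identities with this common $\bar{F}$, the $\mathbb{Q}$-difference contributes $\begin{pmatrix} I_n \\ \bar F \end{pmatrix}^\top(\mathbb{Q}^2 - \mathbb{Q}^1)(t,i)\begin{pmatrix} I_n \\ \bar F \end{pmatrix} \succeq 0$ by hypothesis, and the two signed remainders combine with matching signs, yielding the backward linear equation
\begin{equation*}
-\dot{Y}(t,i) = \mathcal{L}_{\bar F}[\mathbb{Y}(t)](i) + \mathcal{H}(t,i), \qquad \mathcal{H}(t,i) \succeq 0.
\end{equation*}
Because $\mathcal{L}_{\bar F}$ is the Lyapunov-type operator of a closed-loop regime-switching linear stochastic system, its backward propagator $\mathcal{V}(t,s)$ (for $t \leq s$) preserves the cone $\overline{\mathbb{S}}^N_{n+}$---a classical positivity-preservation result, see e.g.\ Chapter 2 of \cite{Dragan2013book}. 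The variation-of-constants representation $\mathbb{Y}(t) = \mathcal{V}(t,\tau)[\mathbb{Y}(\tau)] + \int_t^{\tau} \mathcal{V}(t,s)[\mathcal{H}(s)]\,ds$ then exhibits $\mathbb{Y}(t)$ as a sum of two $\succeq 0$ terms, proving $X^2(t,i) \succeq X^1(t,i)$ for all $t \in [0,\tau] \cap I_1 \cap I_2$ and $i \in \mathfrak{N}$. The main technical labor lies in the construction of $\bar{F}$ and the verification that it simultaneously signs both remainders; the remaining steps are routine algebraic identities and standard Lyapunov theory.
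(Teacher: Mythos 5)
The paper does not prove this theorem: it is imported verbatim from \cite{Dragan2020} and used as a black box, so there is no in-paper proof to compare against. Your argument is correct and is essentially the standard proof from that literature: the mixed feedback $\bar F$ (maximizer block from $\mathbb{F}_{\mathbb{X}^1}$, minimizer block from $\mathbb{F}_{\mathbb{X}^2}$ with the Schur-complement correction) makes the $u=1$ remainder a pure $\mathbb{R}_{22}(t)[\mathbb{X}^1]$-quadratic (hence $\succeq 0$, needing only \eqref{pf:sign_conditions_21} for $\mathbb{X}^1$) and reduces the $u=2$ remainder to a $\mathbb{R}_{22}^{\sharp}(t)[\mathbb{X}^2]$-quadratic (hence $\preceq 0$, needing \eqref{pf:sign_conditions_22} for $\mathbb{X}^2$), which exactly accounts for the asymmetric hypotheses in the statement; the subtraction and the positivity of the anticausal evolution operator of $\mathcal{L}_{\bar F}$ then close the argument as you describe.
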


A useful reformulation is obtained by parameterizing the control of Player 2 as a linear feedback of the state and the control of Player 1.
Setting $u_2(t) = K(t,\eta_t)x(t) + W(t,\eta_t)u_1(t)$ in \eqref{pf:lqzsdg_sde} and \eqref{pf:lqzsdg_pf}, we obtain
\begin{equation}
\label{pr:lqzsdg_kw_sde}
\begin{cases}
dx(t) = \left( A_{0K}(t,\eta_t)x(t) + B_{0W}(t,\eta_t)u_1(t) \right) dt + \sum_{k=1}^r \left( A_{kK}(t,\eta_t)x(t) + B_{kW}(t,\eta_t)u_1(t) \right) dw_k(t)\\
x(0) = x_0 \in \mathbb{R}^n    
\end{cases},   
\end{equation}
in which $x_0 \in \mathbb{R}^n$, and 
\begin{equation}
\label{pr:lqzsdg_kw_pf}
    J_{\text{KW}}(x_0, u_1)
    \triangleq\mathbb{E}\int_{0}^{\infty}\Bigg[\Bigg\langle
    \begin{pmatrix} M_K(t,\eta_t) & L_{\text{KW}}(t,\eta_t) \\ L_{\text{KW}}^{\top}(t,\eta_t) & R_W(t,\eta_t) \end{pmatrix}
    \begin{pmatrix}
    x_{u_1}(t)\\
    u_1(t)
    \end{pmatrix},
    \begin{pmatrix}
    x_{u_1}(t)\\
    u_1(t)
    \end{pmatrix}\Bigg\rangle
    \Bigg]dt,
\end{equation}
where $x_{u_1}(t)$ is the solution of \eqref{pr:lqzsdg_kw_sde} corresponding to input $u_1(t)$ and
\[
\begin{cases}
A_{kK}(t,i) &= A_k(t,i) + B_{k2}(t,i)K(t,i),\quad 0 \leq k \leq r , 1 \leq i \leq N;\\
B_{kW}(t,i) &= B_{k1}(t,i) + B_{k2}(t,i)W(t,i), \quad 0 \leq k \leq r, 1 \leq i \leq N ;\\
M_K(t,i) &= M(t,i) + L_{2}(t,i)K(t,i) + K^{\top}(t,i)L_{2}^{\top}(t,i) + K^{\top}(t,i)R_{22}(t,i)K(t,i), 1 \leq i \leq N ;\\
L_{\text{KW}}(t,i) &= L_{1}(t,i) + K^{\top}(t,i)R_{12}^{\top}(t,i) + (L_{2}(t,i) + K^{\top}(t,i)R_{22}(t,i))W(t,i), 1 \leq i \leq N ;\\
R_W(t,i) &= \begin{pmatrix} I_{m_1} \\ W(t,i) \end{pmatrix}^{\top} \begin{pmatrix} R_{11}(t,i) & R_{12}(t,i) \\ R_{12}^{\top}(t,i) & R_{22}(t,i) \end{pmatrix} \begin{pmatrix} I_{m_1} \\ W(t,i) \end{pmatrix}, 1 \leq i \leq N .
\end{cases} 
\]
The corresponding Riccati differential equations is
\begin{equation}
\label{pr:gtrde_kw}
\begin{split}
& \frac{d}{dt}{Y}(t,i) +A_{0K}^{\top}(t,i)Y(t,i) + Y(t,i)A_{0K}(t,i) + \sum_{k=1}^r A_{kK}^{\top}(t,i)Y(t,i)A_{kK}(t,i) + \sum_{j=1}^N q_{ij}Y(t,j)+ M_K(t,i) \\
&- \left( Y(t,i)B_{0W}(t,i) + \sum_{k=1}^r A_{kK}^{\top}(t,i)Y(t,i)B_{kW}(t,i) + L_{\text{KW}}(t,i) \right) \left( R_W(t,i) + \sum_{k=1}^r B_{kW}^{\top}(t,i)Y(t,i)B_{kW}(t,i) \right)^{-1} \\
& \quad \left( B_{0W}^{\top}(t,i)Y(t,i) + \sum_{k=1}^r B_{kW}^{\top}(t,i)Y(t,i)A_{kK}(t,i) + L_{\text{KW}}^{\top}(t,i) \right) =0, t \ge 0; 1 \leq i \leq N.
\end{split}   
\end{equation}

Similarly, we define the core notation set associated with the Riccati differential equations \eqref{pr:gtrde_kw} as $\boldsymbol{\Sigma}_{\mathbb{KW}} = \left( \mathbb{\hat{A}}_{0\mathbb{K}}(\cdot),\ \mathbb{B}_{0\mathbb{W}}(\cdot),\ \Pi_{1\mathbb{K}}(\cdot),\ \Pi_{2\mathbb{KW}}(\cdot),\ \Pi_{3\mathbb{W}}(\cdot),\ \mathbb{Q}_{\mathbb{KW}}(\cdot) \right)$. For all $t \in \mathbb{R}_+$, the each component specified as follows:
\begin{itemize}
    \item $\mathbb{\hat{A}}_{0\mathbb{K}}(t) =\left( \hat{A}_{0K}(t,1),\ldots,\hat{A}_{rK}(t,1)  \right)= \left( A_{0K}(t,1) + \frac{1}{2}q_{11} \mathbb{I}_n,\ \ldots,\ A_{0K}(t,N) + \frac{1}{2}q_{NN} \mathbb{I}_n \right),\, \mathbb{B}_{0\mathbb{W}}(t) =\\ \left( B_{0W}(t,1),\ \ldots,\ B_{0W}(t,N) \right)$.
    \item The operators $\Pi_{1\mathbb{K}}(t), \Pi_{2\mathbb{KW}}(t)$ and $\Pi_{3\mathbb{W}}(t)$ are defined on $S_n^N$  such that for any  $\mathbb{X} \in S_n^N$:
    \[
    \Pi_{j}(t)[\mathbb{X}] = (\Pi_{j}(t)[\mathbb{X}](1), \ldots, \Pi_{j}(t)[\mathbb{X}](N)), \quad j \in \{1\mathbb{K},\ 2\mathbb{KW},\ 3\mathbb{W}\},
    \]
    where for each $i \in \mathfrak{N}$:
    \[
    \begin{cases}
    \Pi_{1\mathbb{K}}(t)[\mathbb{X}](i) = \sum_{l=1, l \neq i}^N q_{il} X(l) + \sum_{k=1}^r A_{kK}^{\top}(t, i) X(i) A_{kK}(t, i), \\
    \Pi_{2\mathbb{KW}}(t)[\mathbb{X}](i) = \sum_{k=1}^r A_{kK}^{\top}(t, i) X(i) B_{kW}(t, i), \\
    \Pi_{3\mathbb{W}}(t)[\mathbb{X}](i) = \sum_{k=1}^r B_{kW}^{\top}(t, i) X(i) B_{kW}(t, i).
    \end{cases}
    \]
    \item $\mathbb{Q}_{\mathbb{KW}}(t) = \left( Q_{KW}(t,1),\ \ldots,\ Q_{KW}(t,N) \right) \in S_{n+m}^N$, where each symmetric block takes the form:
     \[
    Q_{KW}(t,i) = 
    \begin{pmatrix}
    M_K(t,i) & L_{KW}(t,i) \\
    L_{KW}^\top(t,i) & R_W(t,i)
    \end{pmatrix}, \, \text{for each}\,\, i \in \mathfrak{N}.
    \]    
\end{itemize}

\begin{remark}
\label{pr:pi_kw}
From the following decomposition, it is not difficult to obtain that: for any $(t, i) \in \mathbb{R}_+ \times \mathfrak{N}, \, X(t,i) \succeq 0$, we have 
\begin{align*}
    &\begin{pmatrix} \Pi_{1}(t)[\mathbb{X}](i) & \Pi_{2}(t)[\mathbb{X}](i) \\ \Pi_{2}(t)[\mathbb{X}](i)^{\top} & \Pi_{3}(t)[\mathbb{X}](i) \end{pmatrix}=\begin{pmatrix} \sum_{l=1, l \neq i}^N q_{il} X(l) & 0 \\ 0 & 0 \end{pmatrix} + \sum_{k=1}^r \begin{pmatrix} A^{\top}_{k}(t, i) \\ B^{\top}_{k}(t, i) \end{pmatrix} X(t,i) \begin{pmatrix} A_{k}(t, i) & B_{k}(t, i) \end{pmatrix} \succeq 0, 
\end{align*}
and
\begin{align*}
    &\begin{pmatrix} \Pi_{1\mathbb{K}}(t)[\mathbb{X}](i) & \Pi_{2\mathbb{KW}}(t)[\mathbb{X}](i) \\ \Pi_{2\mathbb{KW}}(t)[\mathbb{X}](i)^{\top} & \Pi_{3\mathbb{W}}(t)[\mathbb{X}](i) \end{pmatrix}=\begin{pmatrix} \sum_{l=1, l \neq i}^N q_{il} X(l) & 0 \\ 0 & 0 \end{pmatrix}+ \sum_{k=1}^r J_{k\mathbb{KW}}(t,i)^{\top} X(t,i) J_{k\mathbb{KW}}(t,i) \succeq 0,\\ 
    &\text{where} \,\, J_{k\mathbb{KW}}(t,i)=\begin{pmatrix} A_{k}(t, i)& B_{k1}(t, i)&B_{k2}(t, i)\end{pmatrix} \begin{pmatrix} \mathbb{I}_{n} & 0 \\ 0 & \mathbb{I}_{m_1} \\  \mathbb{K}(t,i) &\mathbb{W}(t,i)\end{pmatrix},\, 0 \leq k \leq r.
\end{align*}
\end{remark}

The concept of the stabilizing solution for Riccati differential equations \eqref{pr:gtrde_kw} is defined analogously to that for the Stochastic GTRDEs \eqref{pf:gtrde}. In this case, the system
$\left( \mathbb{A}_0(\cdot) + \mathbb{B}_0(\cdot)\mathbb{L_{\tilde{Y}}}(\cdot),  \dots, \mathbb{A}_r(\cdot) + \mathbb{B}_r(\cdot)\mathbb{L_{\tilde{Y}}}(\cdot) \right)$ is stable, where $\mathbb{\tilde{Y}}(\cdot)$ is the stabilizing solution of Riccati differential equations \eqref{pr:gtrde_kw} and the feedback gain $\mathbb{L_{\tilde{Y}}}(t) = (L_{\tilde{Y}}(t,1), \dots, L_{\tilde{Y}}(t,N))$ with $L_{\tilde{Y}}(t,i) \in R^{m \times n}=$
\begin{equation}
\label{pr:feedback_gain_l}
-( R_W(t,i) + \sum_{k=1}^r B_{kW}^{\top}(t,i)\tilde{Y}(t,i)B_{kW}(t,i) )^{-1} ( B_{0W}^{\top}(t,i)\tilde{Y}(t,i) + \sum_{k=1}^r B_{kW}^{\top}(t,i)\tilde{Y}(t,i)A_{kK}(t,i) + L_{\text{KW}}^{\top}(t,i) ),
\end{equation}
for all $(t,i) \in \mathbb{R}_+ \times \mathfrak{N}$.

Throughout this work, $\mathcal{A}^{\Sigma}$ denotes the set of all pairs $(\mathbb{K}(\cdot), \mathbb{W}(\cdot))$ associated with system $\Sigma$, where $\mathbb{K}(\cdot) = (K(\cdot,1),\dots,K(\cdot,N))$, $\mathbb{W}(\cdot) = (W(\cdot,1),\dots,W(\cdot,N))$ with $K(\cdot,i) : \mathbb{R}_+ \to \mathbb{R}^{m_2 \times n}$, $W(\cdot,i) : \mathbb{R}_+ \to \mathbb{R}^{m_2 \times m_1}$ ($1 \leq i \leq N$) being $\theta$-periodic continuous functions, satisfying:
\begin{enumerate}
    \item[(1)] The system $(\mathbb{A}_{0\mathbb{K}},\dotsb,\mathbb{A}_{r\mathbb{K}})$ is stable;
    \item[(2)] The corresponding Riccati differential equations \eqref{pr:gtrde_kw} has a bounded stabilizing solution $\tilde{Y}_{\mathbb{KW}}(\cdot)$, satisfying the sign conditions
    \begin{equation}
    \label{sign_condition_31}
    R_{\mathbb{W}}(t, i) + \sum_{k=1}^r B_{kW}^{\top}(t, i)\tilde{Y}_{\mathbb{K}\mathbb{W}}(t, i)B_{kW}(t, i) \preceq 0, \forall (t, i) \in \mathbb{R}_+ \times \mathfrak{N}.
    \end{equation}
\end{enumerate}

\begin{remark}
If the set $\mathcal{A}^\Sigma$ is non-empty and $R_{22}(t,i) \succ 0; \forall (t,i) \in \mathbb{R}_+ \times \mathfrak{N} $, we have 
\begin{equation*}
R_{11}(t,i) - R_{12}(t,i)R_{22}(t,i)^{-1}R_{12}^{\top}(t,i) \prec 0, \forall(t,i) \in \mathbb{R}_+ \times \mathfrak{N}.
\end{equation*} It can be directly derived from Remark 3 in \cite{Dragan2020}.    
\end{remark}

\section{The Main Results}
\label{sec:main_results}

\subsection{Algorithm Design}
\label{sec:algorithm_design}

In this paper, we aim to numerically solve for the stabilizing solution of Regime-Switching Stochastic GTREDs with given parameter within a deterministic framework. 

Let $\mathbb{X}^{(h)}(\cdot) = (X^{(h)}(\cdot,1), \ldots, X^{(h)}(\cdot,N))$. We initialize $ X^{(0)}(t,i) = 0$ for all $(t, i) \in \mathbb{R}_+ \times \mathfrak{N}$, and then construct the sequence $\{\mathbb{X}^{(h)}(\cdot)\}_{h\geq0} $, where each component $X^{(h)}(\cdot,i)$ ($1 \leq i \leq N$) is the unique minimal positive semidefinite solution to the following Riccati differential equations:
\begin{equation}
\label{algorithm}
    \begin{aligned}
        &\frac{d}{dt}{X^{(h)}}(t,i) + \hat{A}^{\top}(t,i)X^{(h)}(t,i) + X^{(h)}(t,i)\hat{A}(t,i) + M^{(h)}(t,i) \\
        &- \left( X^{(h)}(t,i)B_0(t,i) + L^{(h)}(t,i) \right)R^{(h)}(t,i)^{-1} \left( B_0^{\top}(t,i)X^{(h)}(t,i) + L^{(h) \top}(t,i) \right)=0 , t \ge 0.    
    \end{aligned}
\end{equation}
The matrices $M^{(h)}(t,i),L^{(h)}(t,i),R^{(h)}(t,i)$ evolve as:  
\[
    \begin{cases}
       M^{(h)}(t,i)=\Pi_1(t)\left[\mathbb{X}^{(h-1)}\right](i)+M(t,i),\forall (t, i) \in \mathbb{R}_+ \times \mathfrak{N};\\
       L^{(h)}(t,i)=\Pi_2(t)\left[\mathbb{X}^{(h-1)}\right](i)+L(t,i),\forall (t, i) \in \mathbb{R}_+ \times \mathfrak{N};\\
       R^{(h)}(t,i)=\Pi_3(t)\left[\mathbb{X}^{(h-1)}\right](i)+R(t,i),\forall (t, i) \in \mathbb{R}_+ \times \mathfrak{N}.
    \end{cases}
\]  

Formally, this aligns with the setup of the zero-sum game in a deterministic framework. The problem formulation of linear-quadratic deterministic differential game is presented below:
\begin{equation} 
\label{pf:lqdg_sde}
    \begin{cases} 
        &dx(t,i) = \left( \hat{A}(t,i)x(t,i) + B_{01}(t,i)u_1(t,i)+B_{02}(t,i)u_2(t,i) \right) dt \\
        &x(0,i) = x_0
    \end{cases}, 
\end{equation}
in which $x_0 \in\mathbb{R}^n$, and Player 1 and Player 2 share the same performance functional:
\begin{equation}
\label{pf:lqdg_pf}
    J_{i}^{(h)}(x_0;u_1,u_2)\triangleq\int_{0}^{\infty}\Bigg[\Bigg\langle\begin{pmatrix}
    M^{(h)}(t,i) & L^{(h)}_1(t,i)  & L^{(h)}_2(t,i) \\
    L^{(h) \top}_1(t,i) & R^{(h)}_{11}(t,i) & R^{(h)}_{12}(t,i)\\
    L^{(h) \top}_2(t,i) & R^{(h) \top}_{12}(t,i) & R^{(h)}_{22}(t,i)
    \end{pmatrix}\begin{pmatrix}
    x_u(t,i)\\
    u_1(t,i)\\
    u_2(t,i)
    \end{pmatrix},\begin{pmatrix}
    x_u(t,i)\\
    u_1(t,i)\\
    u_2(t,i)
    \end{pmatrix}\Bigg\rangle
    \Bigg]dt,
\end{equation}
where $x_{u}(t,i)$ is the solution of \eqref{pf:lqdg_sde} corresponding to input $u_1(t,i)$ and $u_2(t,i)$. The matrix partitioning involved in the above problem description is consistent with that of the corresponding matrix in the Section \ref{subsec:gtrdes}. 
As a special case of ZSLQSDG problem, the core symbol set is expressed as follows:  
\[
\boldsymbol{\Sigma}^{(h)}= (\mathbb{\hat{A}}(\cdot),\mathbb{B}_0(\cdot),\mathbb{Q}^{(h)}(\cdot))
\] and its components as  
$\mathbb{Q}^{(h)}(t) = (Q^{(h)}(t,1), \ldots, Q^{(h)}(t,N)) \in S_{n+m}^N $, with each symmetric component  
\[
    Q^{(h)}(t,i) = \begin{pmatrix} M(t,i) & L(t,i) \\ L^{\top}(t,i) & R(t,i) \end{pmatrix} +\begin{pmatrix} \Pi_1(t)[\mathbb{X}^{(h-1)}](i) & \Pi_2(t)[\mathbb{X}^{(h-1)}](i) \\ \Pi_2(t)[\mathbb{X}^{(h-1)}](i)^{\top} & \Pi_3(t)[\mathbb{X}^{(h-1)}](i) \end{pmatrix}, \forall (t, i) \in \mathbb{R}_+ \times \mathfrak{N}.
\]

\begin{definition}
The system \eqref{pf:lqdg_sde} denoted as 
\[
\left[ \mathbb{A}_0(\cdot) ; \mathbb{B}_{0}(\cdot) \right],
\] 
is called stabilizable if there exists a tuple $\mathbb{\varTheta}(\cdot)= \left(\varTheta(\cdot,1),\ldots, \varTheta(\cdot,N)\right)$ with the $\theta$-periodic and continuous function $\varTheta(\cdot,i)(1 \leq i \leq N) : \mathbb{R}_+ \to \mathbb{R}^{n \times p}$ such that the system $\left( \mathbb{A}_0(\cdot)+ \mathbb{B}_{0}(\cdot) \mathbb{\varTheta}(\cdot) \right)$ is stable.
\end{definition}

\begin{remark}
In the deterministic problem corresponding to system \eqref{pf:lqdg_sde}, the different system modes are practically non-interfering, enabling stabilizability to be defined independently for the deterministic subsystem associated with each mode. For consistency and to facilitate a unified treatment of all modes in subsequent proofs, we adopt definitions analogous to those used in stochastic systems. 
\end{remark}

\begin{definition}[\cite*{Dragan2013book}]
Consider the following stochastic observation system:
\begin{equation}
\label{pr:stochastic_observation_system}
    \begin{cases}
         dx(t) = A_0(t,\eta_t)x(t)dt + \sum_{k=1}^{r}A_k(t,\eta_t)x(t)dw_k(t) \\
         dz(t) = C_0(t,\eta_t)x(t)dt 
    \end{cases}
\end{equation}
where $C_0(t,i) \in \mathbb{R}^{q \times n}$ for all $(t,i) \in \mathbb{R}_{+} \times \mathfrak{N}$.
The system $\left[ \mathbb{C}(\cdot); \mathbb{A}_{0}(\cdot), \dotsb, \mathbb{A}_{r}(\cdot) \right]$ is called \textit{detectable} if there exists a tuple $\mathbb{\varTheta}(\cdot)= \left(\varTheta(\cdot,1),\ldots, \varTheta(\cdot,N)\right)$ with the $\theta$-periodic and continuous function $\varTheta(\cdot,i)(1 \leq i \leq N) : \mathbb{R}_+ \to \mathbb{R}^{n \times p}$ such that the system $(\mathbb{A}_{0}(\cdot)+\mathbb{\varTheta}(\cdot) \mathbb{C}_{0}(\cdot),\dotsb, \mathbb{C}_{r}(\cdot))$ is stable. 
\end{definition}

In the subsequent content, we will show:
\begin{itemize}
    \item The sequences $\{\mathbb{X}^{(h)}(\cdot)\}_{h\geq0} $ are well-defined, and it is the unique minimal positive semidefinite solution of the Riccati differential equation \eqref{algorithm} that satisfy the sign conditions \ref{pf:sign_conditions_1}. 
    \item These sequences are convergent and we have  
    \[
    0 = X^{(0)}(t,i) \preceq \dots \preceq X^{(h)}(t,i) \preceq X^{(h+1)}(t,i) \preceq \dots \preceq \tilde{X}(t,i),\\
    \lim_{h\to\infty} X^{(h)}(t,i) = \tilde{X}(t,i), \forall \, (t, i) \in  \mathbb{R}_{+} \times \mathfrak{N},
    \]
    where $\tilde{\mathbb{X}}(\cdot)$ is the unique $\theta$-periodic and stabilizing solution solution to the Regime-Switching Stochastic GTRDEs \eqref{pf:gtrde}.
\end{itemize}

\begin{remark}
As a special case of ZSLQSDG problem, the core notation set related to Riccati differential equations \eqref{pr:gtrde_kw} as
$\Sigma_{\mathbb{KW}}^{(h)}= (\mathbb{\hat{A}}_{0\mathbb{K}}(\cdot),\mathbb{B}_{0\mathbb{W}}(\cdot),\mathbb{Q}_{\mathbb{KW}}^{(h)}(\cdot))$ and its components as follows:  
\[
\mathbb{Q}_{\mathbb{KW}}^{(h)}(t) = (\mathbb{Q}_{\mathbb{KW}}^{(h)}(t,1), \ldots, \mathbb{Q}_{\mathbb{KW}}^{(h)}(t,N)) \in \mathbb{S}_{n+m}^N, 
\] with each symmetric component  
\[
    \mathbb{Q}_{\mathbb{KW}}^{(h)}(t,i) = \begin{pmatrix} M_{K}(t,i) & L_{KW}(t,i) \\ L_{KW}^{\top}(t,i) & R_{W}(t,i) \end{pmatrix} +\begin{pmatrix} \Pi_{1\mathbb{K}}(t)[\mathbb{X}^{(h-1)}](i) & \Pi_{2\mathbb{KW}}(t)[\mathbb{X}^{(h-1)}](i) \\ \Pi_{2\mathbb{KW}}(t)[\mathbb{X}^{(h-1)}](i)^{\top} & \Pi_{3\mathbb{W}}(t)[\mathbb{X}^{(h-1)}](i) \end{pmatrix}, \forall (t, i) \in \mathbb{R}_+ \times \mathfrak{N}.
\]
\end{remark}

\subsection{Convergence of the Iterative Sequence}

Before introducing the convergence of sequences, we first present the equivalent conditions for the non-emptiness of the set $\mathcal{A}^{\Sigma}$ and some useful properties.  

\begin{proposition}
\label{main:proposition}
Under the Assumption \ref{pf:ass_1}, the following are equivalent:
\begin{enumerate}
    \item[$(i.)$] $ (\mathbb{K}(\cdot), \mathbb{W}(\cdot)) \in \mathcal{A}^\Sigma $.
    \item[$(ii.)$] There exists a function $t \to \mathbb{Y}(t) = (Y(t,1), \ldots, Y(t,N)): \mathbb{R}_+ \to \mathbb{S}_n^N$ which is differentiable with continuous derivative and periodic, with period $\theta$, satisfying the following systems of linear matrix inequalities (LMIs):
    \begin{equation}
    \label{LMIs_kw}
    \begin{pmatrix}
    \frac{d}{dt}{Y}(t,i) +\mathcal{L}_{\mathbb{K}}(t)[\mathbb{Y}](i)+ \Pi_{1\mathbb{K}}(t)[\mathbb{Y}](i)+M_K(t,i) & Y(t,i)B_{0W}(t,i) +\Pi_{2\mathbb{KW}}(t)[\mathbb{Y}](i)+ L_{KW}(t,i) \\
    B^{\top}_{0W}(t,i)Y(t,i) +\Pi_{2\mathbb{KW}}(t)[\mathbb{Y}](i)^{\top}+ L^{\top}_{KW}(t,i) & R_{W}(t,i)+\Pi_{3\mathbb{W}}(t)[\mathbb{Y}](i)
    \end{pmatrix} 
    \end{equation}
    $\prec 0, Y(t,i) \succ 0, \forall (t,i) \in \mathbb{R}_+ \times \mathfrak{N} $,
    where $\mathcal{L}_{\mathbb{K}}(t)[\mathbb{Y}](i)=\hat{A}_{0K}^{\top}(t,i)Y(t,i) + Y(t,i)\hat{A}_{0K}(t,i)$.
    \item[$(iii.)$] The Riccati differential equation \eqref{pr:gtrde_kw} has a unique $\theta$-periodic solution $\tilde{\mathbb{Y}}_{\mathbb{K}\mathbb{W}}(\cdot)$ which has the following properties:
    \begin{enumerate}
        \item[(a.)] $ 0 \preceq \tilde{Y}_{\mathbb{K}\mathbb{W}}(t,i) \preceq \hat{Y}(t,i) $ holds for all $ (t,i) \in \mathbb{R}_+ \times \mathfrak{N} $, where $ \hat{\mathbb{Y}}(t) = (\hat{Y}(t,1), \ldots, \hat{Y}(t,N)) $ is a differentiable function with a continuous derivative, is periodic with period $ \theta $, and satisfies the LMIs \ref{LMIs_kw}.
        \item[(b.)] 
        \begin{equation}
        \label{sign_condition_32}
        R_{W}(t, i) + \sum_{k=1}^r B_{kW}^{\top}(t, i)\tilde{Y}_{\mathbb{K}\mathbb{W}}(t,i)B_{kW}(t, i) \prec 0, \forall (t,i) \in \mathbb{R}_+ \times \mathfrak{N}.
        \end{equation}
        \item[(c.)] $ 0 \preceq \tilde{Y}_{\mathbb{K}\mathbb{W}}(t,i) \preceq Y(t,i), \forall (t,i) \in \mathbb{R}_+ \times \mathfrak{N} $, $\mathbb{Y}(t) = (Y(t,1), \ldots, Y(t,N))$ being an arbitrary global and positive semidefinite solution of the Riccati differential equation \eqref{pr:gtrde_kw} that satisfies the sign condition \eqref{sign_condition_32}.
        \item[(d.)] $\tilde{\mathbb{Y}}_{\mathbb{K}\mathbb{W}}(\cdot)$  is the stabilizing solution. This means the system 
        \[
        \left( \mathbb{A}_0(\cdot) + \mathbb{B}_0(\cdot)\mathbb{L_{\tilde{\mathbb{Y}}_{\mathbb{K}\mathbb{W}}}}(\cdot), \, \dots, \, \mathbb{A}_r(\cdot) + \mathbb{B}_r(\cdot)\mathbb{L_{\tilde{\mathbb{Y}}_{\mathbb{K}\mathbb{W}}}}(\cdot) \right)
        \] is stable, where $\mathbb{L_{\tilde{\mathbb{Y}}_{\mathbb{K}\mathbb{W}}}}(\cdot)$ is defined in \eqref{pr:feedback_gain_l}.
    \end{enumerate}
\end{enumerate}
\end{proposition}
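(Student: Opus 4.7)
The plan is to establish the equivalences via the cycle $(iii) \Rightarrow (i) \Rightarrow (ii) \Rightarrow (iii)$, treating the stability of $(\mathbb{A}_{0\mathbb{K}}, \ldots, \mathbb{A}_{r\mathbb{K}})$ as a standing requirement built into the setup. The implication $(iii) \Rightarrow (i)$ is immediate: condition \eqref{sign_condition_32} in $(b)$ is strictly stronger than \eqref{sign_condition_31}, and $(d)$ is exactly the stabilizing property demanded by the definition of $\mathcal{A}^\Sigma$.

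For $(i) \Rightarrow (ii)$, since $\tilde{\mathbb{Y}}_{\mathbb{KW}}$ satisfies the Riccati equation \eqref{pr:gtrde_kw}, plugging it into the LMI block matrix and applying the Schur complement yields only a non-strict $\preceq 0$; strictness must be produced by perturbation. I would set $\mathbb{Y}_\varepsilon = \tilde{\mathbb{Y}}_{\mathbb{KW}} + \varepsilon \mathbb{V}$, where $\mathbb{V}$ is the unique bounded $\theta$-periodic positive definite solution of the linear Lyapunov equation associated with the closed-loop operator $\mathbb{A}_{0\mathbb{K}} + \mathbb{B}_{0\mathbb{W}} \mathbb{L}_{\tilde{\mathbb{Y}}}$ with forcing $\mathbb{I}$; existence and positivity of $\mathbb{V}$ follow from property $(d)$ via the standard Lyapunov characterization of mean-square exponential stability under regime switching. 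A first-order expansion in $\varepsilon$, combined with Schur-complement manipulations and the strict negativity $R_{11}-R_{12}R_{22}^{-1}R_{12}^{\top} \prec 0$ noted after the definition of $\mathcal{A}^\Sigma$, then makes both the $(2,2)$ block and the full block matrix in \eqref{LMIs_kw} strictly negative definite for $\varepsilon$ small enough, while $\mathbb{Y}_\varepsilon \succ 0$ is automatic since $\mathbb{V} \succ 0$.

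The main work lies in $(ii) \Rightarrow (iii)$. Given the LMI solution $\hat{\mathbb{Y}}$, first note that a diagonal-block argument on \eqref{LMIs_kw} yields $R_W + \Pi_{3\mathbb{W}}[\hat{\mathbb{Y}}] \prec 0$ and in particular $R_W \prec 0$. For each $\tau > 0$, consider the backward Riccati differential equation associated with $\Sigma_{\mathbb{KW}}$ on $[0, \tau]$ with terminal data $\mathbb{Y}_\tau(\tau) = 0$. After rewriting the strict LMI as $\mathcal{R}[\hat{\mathbb{Y}}] = -\Delta$ with $\Delta \succ 0$ via the Schur complement and absorbing this slack into the $\mathbb{Q}$-matrix, $\hat{\mathbb{Y}}$ is a bona fide Riccati solution for a dominating $\mathbb{Q}' \succeq \mathbb{Q}_{\mathbb{KW}}$, so Theorem \ref{comparison_theorem} yields global existence of $\mathbb{Y}_\tau$ on $[0,\tau]$ with $0 \preceq \mathbb{Y}_\tau(t) \preceq \hat{\mathbb{Y}}(t)$ and the strict sign condition preserved along the trajectory. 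A second application of the comparison theorem gives monotonicity of $\mathbb{Y}_\tau$ in $\tau$; uniform boundedness produces a pointwise limit $\tilde{\mathbb{Y}}_{\mathbb{KW}}$ as $\tau \to \infty$. The $\theta$-periodicity of the coefficients together with the shift-invariance of the Riccati flow forces this limit to be $\theta$-periodic, and passing to the limit in the Riccati equation gives $(a)$, $(b)$, $(c)$. For property $(d)$, subtracting the Riccati identity from the LMI and completing the square yields a strict dissipation inequality for the closed-loop operator with feedback $\mathbb{L}_{\tilde{\mathbb{Y}}}$; with $\hat{\mathbb{Y}} - \tilde{\mathbb{Y}}_{\mathbb{KW}} \succeq 0$ as Lyapunov certificate, this is equivalent to mean-square exponential stability via the Lyapunov criterion for regime-switching systems. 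Uniqueness in $(c)$ follows from one further application of Theorem \ref{comparison_theorem} against any competing positive semidefinite solution satisfying \eqref{sign_condition_32}.

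The principal obstacle is the step $(ii) \Rightarrow (iii)$: one must control the mode coupling $\sum_{j} q_{ij} X(\cdot, j)$ in the limit passage, and simultaneously preserve the strict sign condition uniformly in $\tau$ so that the quadratic inversion in the Riccati equation remains well-defined along the approximating sequence. The periodicity of the coefficients is essential here, since it is what allows the monotone limit to yield a $\theta$-periodic solution rather than merely a bounded one.
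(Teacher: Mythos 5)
There are two genuine gaps in your cycle, both at the points where you depart from the paper's route ($\,(i)\Rightarrow(ii)\Rightarrow(iii)$, then $(ii)+(iii)\Rightarrow(i)$).

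First, your implication $(iii)\Rightarrow(i)$ is not ``immediate.'' Membership in $\mathcal{A}^\Sigma$ requires \emph{two} things: that the Riccati equation \eqref{pr:gtrde_kw} have a bounded stabilizing solution with the sign condition \eqref{sign_condition_31}, \emph{and} that the open-loop system $(\mathbb{A}_{0\mathbb{K}},\dots,\mathbb{A}_{r\mathbb{K}})$ be stable. Property $(d)$ gives stability of the \emph{closed-loop} system with feedback $\mathbb{L}_{\tilde{\mathbb{Y}}_{\mathbb{K}\mathbb{W}}}$, which is a different statement; treating open-loop stability as ``a standing requirement built into the setup'' assumes part of what must be proved. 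This is precisely why the paper proves $(ii)+(iii)\Rightarrow(i)$ rather than $(iii)\Rightarrow(i)$: the stability of $(\mathbb{A}_{0\mathbb{K}},\dots,\mathbb{A}_{r\mathbb{K}})$ is extracted from the LMI in $(ii)$ together with Assumption \ref{pf:ass_1}(3) (which forces the $(1,1)$ corner of \eqref{LMIs_kw} to yield a strict Lyapunov-type inequality $\frac{d}{dt}Y+\mathcal{L}_{\mathbb{K}}(t)[\mathbb{Y}]+\Pi_{1\mathbb{K}}(t)[\mathbb{Y}]\prec 0$). You would need to either reinsert $(ii)$ into this implication or supply a separate argument that $(iii)$ alone implies open-loop stability.

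Second, your perturbation argument for $(i)\Rightarrow(ii)$ pushes the $(2,2)$ block the wrong way. The sign condition available from the definition of $\mathcal{A}^\Sigma$ is only the non-strict \eqref{sign_condition_31}, i.e.\ $R_{\mathbb{W}}+\Pi_{3\mathbb{W}}(t)[\tilde{\mathbb{Y}}_{\mathbb{K}\mathbb{W}}]\preceq 0$, possibly with a nontrivial kernel. Setting $\mathbb{Y}_\varepsilon=\tilde{\mathbb{Y}}_{\mathbb{K}\mathbb{W}}+\varepsilon\mathbb{V}$ with $\mathbb{V}\succ 0$ adds $\varepsilon\,\Pi_{3\mathbb{W}}(t)[\mathbb{V}]\succeq 0$ to that block, so no choice of small $\varepsilon>0$ can produce the strict negativity of the $(2,2)$ block that \eqref{LMIs_kw} demands; the remark $R_{11}-R_{12}R_{22}^{-1}R_{12}^{\top}\prec 0$ concerns $R$, not $R_{\mathbb{W}}+\Pi_{3\mathbb{W}}[\cdot]$, and does not rescue this. (There is also a mismatch between the Lyapunov operator you use to build $\mathbb{V}$ -- the closed-loop operator with $\mathbb{L}_{\tilde{\mathbb{Y}}}$ -- and the operator $\hat{A}_{0K}$ appearing in the $(1,1)$ block, which your ``Schur-complement manipulations'' would have to reconcile.) The paper avoids all of this by invoking Theorem 3.3.1(v) of the cited monograph, which directly supplies a $\theta$-periodic $\mathbb{Y}\succ 0$ satisfying the \emph{strict} Riccati inequality together with the strict sign condition, after which the Schur complement gives \eqref{LMIs_kw}. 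Your step $(ii)\Rightarrow(iii)$, by contrast, is essentially the paper's argument (terminal-value approximants, comparison theorem for monotonicity and boundedness, periodic limit, and a completed-square dissipation identity for $(d)$) and is sound in outline.
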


\begin{proof}
$ (i.) \implies (ii.) $  
Since $ (\mathbb{K}(\cdot), \mathbb{W}(\cdot)) \in \mathcal{A}^\Sigma $, the system $ (\mathbb{A}_{0\mathbb{K}}(\cdot),\dots,\mathbb{A}_{r\mathbb{K}}(\cdot)) $ is stable. From (v) of Theorem 3.3.1. in \cite{Dragan2013book}, exists a function $t \to \mathbb{Y}(t) = (Y(t,1), \ldots, Y(t,N)): \mathbb{R}_+ \to \mathbb{S}_n^N$ which is differentiable with continuous derivative and periodic, with period $\theta$, satisfying the following systems of linear matrix inequalities (LMIs):  
\begin{align*}
&\frac{d}{dt}{Y}(t,i) +\mathcal{L}_{\mathbb{K}}(t)[\mathbb{Y}](i) +\Pi_{1\mathbb{K}}(t)[\mathbb{Y}](i)+ M_K(t,i)\\
&- \left( Y(t,i)B_{0W}(t,i) +\Pi_{2\mathbb{KW}}(t)[\mathbb{Y}](i)+ L_{KW}(t,i) \right)\left( R_{W}(t,i)+\Pi_{3\mathbb{W}}(t)[\mathbb{Y}](i) \right)^{-1} \\
&\left( B^{\top}_{0W}(t,i)Y(t,i) +\Pi_{2\mathbb{KW}}(t)[\mathbb{Y}](i)^{\top}+ L^{\top}_{KW}(t,i)\right) \prec 0,\,\text{and}\, Y(t,i) \succ 0,\, t \ge 0, \, 1 \leq i \leq N.
\end{align*}
Moreover, this solution satisfies the sign condition \ref{sign_condition_32}. 
Using the Schur complement, the inequality is reformulated as the LMIs given in \eqref{LMIs_kw}. Thus, $ \mathbb{Y}(\cdot) $ satisfies LMIs \eqref{LMIs_kw}.

$ (ii.) \implies (iii.) $  
Assume there exists a function $ \mathbb{Y}(\cdot) $ as in $ (ii) $. Since $M(t,i) - L_2(t,i)R_{22}(t,i)^{-1}L_2^{\top}(t,i) \succeq 0$ for all $(t,i) \in \mathbb{R}_+ \times \mathfrak{N}$ in Assumption \ref{pf:ass_1}, we have  
\begin{align*}
&\frac{d}{dt}{Y}(t,i) +\mathcal{L}_{\mathbb{K}}(t)[\mathbb{Y}](i) +\Pi_{1\mathbb{K}}(t)[\mathbb{Y}](i) \prec 0, t \ge 0; \, 1 \leq i \leq N.
\end{align*}
This means the system $ (\mathbb{A}_{0\mathbb{K}},\dots,\mathbb{A}_{r\mathbb{K}}) $ stable by using (vi) of Theorem 3.3.1. in \cite{Dragan2013book}. So the conditions of Theorem 5.6.4. in \cite{Dragan2013book} are satisfied. 
Under these conditions the Riccati differential equation \eqref{pr:gtrde_kw} has a bounded solution $\tilde{Y}(t,i)$ which verifies $0 \preceq \tilde{Y}(t,i) \preceq \check{Y}(t,i);\forall (t,i) \in \mathbb{R}_+ \times \mathfrak{N}$ for any bounded solutions $\check{Y}(t,i)$ of the inequality \eqref{pr:gtrde_kw} which satisfies sign condition \ref{sign_condition_32}.
Then $R_{W}(t, i) + \sum_{k=1}^r B_{kW}^{\top}(t, i)\tilde{Y}(t,i)B_{kW}(t, i) \preceq R_{W}(t, i) + \sum_{k=1}^r B_{kW}^{\top}(t, i)\check{Y}(t,i)B_{kW}(t, i) \prec 0$, we get $(a.)$ and $(b.)$ hold.
Let $\mathbb{Y}(t) = (Y(t,1), \ldots, Y(t,N))$ being an arbitrary global and positive semidefinite solution of the Riccati differential equation \eqref{pr:gtrde_kw} that satisfies the sign condition \ref{sign_condition_32}. By transforming the Riccati differential equation \eqref{pr:gtrde_kw}, we obtain $\mathbb{Y}(\cdot)$ satisfied conditions of Theorem 5.6.4. in \cite{Dragan2013book}. This means $\tilde{Y}(t,i) \preceq Y(t,i);\forall (t,i) \in \mathbb{R}_+ \times \mathfrak{N}$, the $(c.)$ is proved.

For every bounded uniform positive and continuous function $\mathbb{H}(t) = (H(t,1), \ldots, H(t,N))$, we has a positive semidefinite solution $\mathbb{Y}(\cdot)=(Y(\cdot,1),\ldots,Y(\cdot,N))$ satisfies the following Riccati differential equation:
\begin{equation*}
\begin{split}    
& \frac{d}{dt}{Y}(t,i) +\mathcal{L}_{\mathbb{K}}(t)[\mathbb{Y}](i)+ \Pi_{1\mathbb{K}}(t)[\mathbb{Y}](i)+M_K(t,i) - \left( Y(t,i)B_{0W}(t,i) +\Pi_{2\mathbb{KW}}(t)[\mathbb{Y}](i)+ L_{KW}(t,i) \right)+H(t,i)\\
& \quad \left( R_{W}(t,i)+\Pi_{3\mathbb{W}}(t)[\mathbb{Y}](i)  \right)^{-1}\left( B^{\top}_{0W}(t,i)Y(t,i) +\Pi_{2\mathbb{KW}}(t)[\mathbb{Y}](i)^{\top}+ L^{\top}_{KW}(t,i) \right) =0, t \ge 0; 1 \leq i \leq N.
\end{split}
\end{equation*}
by using the Theorem 3.3.1. in \cite{Dragan2013book}.

Setting $\Delta(t,i)=Y(t,i)-\tilde{Y}(t,i),\forall (t,i) \in \mathbb{R}_+ \times \mathfrak{N}$ and transforming the Riccati differential equation \eqref{pr:gtrde_kw} using the Proposition 3.3 in \cite{wang2025}, we obtain:
\begin{align*}
& \frac{d}{dt}{Y}(t,i) +\mathcal{L}_{\mathbb{K}}(t)[\mathbb{Y}](i)+ \Pi_{1\mathbb{K}}(t)[\mathbb{Y}](i)+M_K(t,i) - \left( Y(t,i)B_{0W}(t,i) +\Pi_{2\mathbb{KW}}(t)[\mathbb{Y}](i)+ L_{KW}(t,i) \right)\\
& \quad \left( R_{W}(t,i)+\Pi_{3\mathbb{W}}(t)[\mathbb{Y}](i)  \right)^{-1}\left( B^{\top}_{0W}(t,i)Y(t,i) +\Pi_{2\mathbb{KW}}(t)[\mathbb{Y}](i)^{\top}+ L^{\top}_{KW}(t,i) \right) +H(t,i)\\
\end{align*}
\begin{align*}
=& \frac{d}{dt}{Y}(t,i)+\mathcal{L}_{\mathbb{K}}(t)[\mathbb{\tilde{Y}}](i)+ \Pi_{1\mathbb{K}}(t)[\mathbb{\tilde{Y}}](i)+M_K(t,i) - \left( Y(t,i)B_{0W}(t,i) +\Pi_{2\mathbb{KW}}(t)[\mathbb{\tilde{Y}}](i)+ L_{KW}(t,i) \right)\\
& \quad \left( R_{W}(t,i)+\Pi_{3\mathbb{W}}(t)[\mathbb{\tilde{Y}}](i)  \right)^{-1}\left( B^{\top}_{0W}(t,i)Y(t,i) +\Pi_{2\mathbb{KW}}(t)[\mathbb{\tilde{Y}}](i)^{\top}+ L^{\top}_{KW}(t,i) \right) + \sum_{j=1}^N q_{ij}Y(t,j)\\
&+\left( A_{0K}(t,i) + B_{0W}L_{\tilde{Y}}(t,i)\right)^{\top}\Delta(t,i) + \Delta(t,i)\left( A_{0K}(t,i) + B_{0W}L_{\tilde{Y}}(t,i)\right) \\
&+ \sum_{k=1}^r \left( A_{kK}(t,i) + B_{kW}L_{\tilde{Y}}(t,i)\right)^{\top}\Delta(t,i)\left( A_{kK}(t,i) + B_{kW}L_{\tilde{Y}}(t,i)\right)\\
&- \left[ \Delta(t,i)B_{0W}(t,i) + \sum_{k=1}^r \left( A_{kK}(t,i) + B_{kW}L_{\tilde{Y}}(t,i)\right)^{\top}\Delta(t,i)B_{kW}(t,i) \right]\left( R_W(t,i) + \sum_{k=1}^r B_{kW}^{\top}(t,i)Y(t,i)B_{kW}(t,i) \right)^{-1} \\
& \quad \left[ B_{0W}^{\top}(t,i)\Delta(t,i) + \sum_{k=1}^r B_{kW}^{\top}(t,i)\Delta(t,i)\left( A_{kK}(t,i) + B_{kW}L_{\tilde{Y}}(t,i)\right) \right]+H(t,i)\\
=&\frac{d}{dt}{\Delta}(t,i)+\sum_{j=1}^N q_{ij}\Delta(t,j)+\left( A_{0K}(t,i) + B_{0W}L_{\tilde{Y}}(t,i)\right)^{\top}\Delta(t,i) + \Delta(t,i)\left( A_{0K}(t,i) + B_{0W}L_{\tilde{Y}}(t,i)\right)\\
&+ \sum_{k=1}^r \left( A_{kK}(t,i) + B_{kW}L_{\tilde{Y}}(t,i)\right)^{\top}\Delta(t,i)\left( A_{kK}(t,i) + B_{kW}L_{\tilde{Y}}(t,i)\right)\\
&- \left[ \Delta(t,i)B_{0W}(t,i) + \sum_{k=1}^r \left( A_{kK}(t,i) + B_{kW}L_{\tilde{Y}}(t,i)\right)^{\top}\Delta(t,i)B_{kW}(t,i) \right]\left( R_W(t,i) + \sum_{k=1}^r B_{kW}^{\top}(t,i)Y(t,i)B_{kW}(t,i) \right)^{-1} \\
& \quad \left[ B_{0W}^{\top}(t,i)\Delta(t,i) + \sum_{k=1}^r B_{kW}^{\top}(t,i)\Delta(t,i)\left( A_{kK}(t,i) + B_{kW}L_{\tilde{Y}}(t,i)\right) \right]+H(t,i)
\end{align*}    

Since $\Delta(t,i) \succeq 0,\forall (t,i) \in \mathbb{R}_+ \times \mathfrak{N}$, we get the system  
\[
\left( \mathbb{A}_0(\cdot) + \mathbb{B}_0(\cdot)\mathbb{L_{\tilde{\mathbb{Y}}_{\mathbb{K}\mathbb{W}}}}(\cdot), \, \dots, \, \mathbb{A}_r(\cdot) + \mathbb{B}_r(\cdot)\mathbb{L_{\tilde{\mathbb{Y}}_{\mathbb{K}\mathbb{W}}}}(\cdot) \right)
\]
is stable. So $ \tilde{\mathbb{Y}}_{\mathbb{K}\mathbb{W}}(\cdot) $ is stabilizing solution of the Riccati differential equation \eqref{pr:gtrde_kw}. Thus, $ (d.) $ holds.

$(ii.)+(iii.) \implies (i.)$  
If the Riccati differential equation \eqref{pr:gtrde_kw} has a unique $\theta$-periodic stabilizing solution $\tilde{\mathbb{Y}}_{\mathbb{K}\mathbb{W}}(\cdot)$ with the properties in $(iii)$, we immediately having the system $ (\mathbb{A}_{0\mathbb{K}}(\cdot),\dots,\mathbb{A}_{r\mathbb{K}}(\cdot)) $ and the system \[
\left( \mathbb{A}_0(\cdot) + \mathbb{B}_0(\cdot)\mathbb{L_{\tilde{\mathbb{Y}}_{\mathbb{K}\mathbb{W}}}}(\cdot), \, \dots, \, \mathbb{A}_r(\cdot) + \mathbb{B}_r(\cdot)\mathbb{L_{\tilde{\mathbb{Y}}_{\mathbb{K}\mathbb{W}}}}(\cdot) \right)
\] is stable. Then by definition, existing $(\mathbb{K}(\cdot), \mathbb{W}(\cdot)) \in \mathcal{A}^\Sigma$.
\end{proof}

\begin{remark}
\label{main:proposition_kw}
If the Assumption \ref{pf:ass_1} is satisfied and the Regime-Switching Stochastic GTRDEs \eqref{pf:gtrde} has the the unique stabilizing and $\theta$-periodic solution $\mathbb{\tilde{X}}(\cdot) = \bigl(\tilde{X}(\cdot,1), \ldots, \tilde{X}(\cdot,N)\bigr)\in \mathbb{S}^N_n$, then the set $\mathcal{A}^\Sigma$ is non-empty.
Let $\mathbb{K}(\cdot) = \bigl(K(\cdot,1), \ldots, K(\cdot,N)\bigr),\mathbb{W}(\cdot) = \bigl(W(\cdot,1), \ldots, W(\cdot,N)\bigr)$with 
\[K(t,i)=-(R_{22}(t,i)+\sum_{k=1}^{r}B^{\top}_{k2}(t,i)\tilde{X}(t,i)B_{k2}(t,i) )^{-1}(B_{02}^{\top}(t,i)\tilde{X}(t,i) + \sum_{k=1}^{r}B_{k2}^{\top}(t,i)\tilde{X}(t,i)A_k(t,i)),\,\text{and}\] 
\[W(t,i)=-(R_{22}(t,i)+\sum_{k=1}^{r}B^{\top}_{k2}(t,i)\tilde{X}(t,i)B_{k2}(t,i))^{-1}(R_{21}(t,i)+\sum_{k=1}^{r}B^{\top}_{k2}(t,i)\tilde{X}(t,i)B_{k1}(t,i)),\forall (t,i) \in \mathbb{R}_+ \times \mathfrak{N}\]
we have $(\mathbb{K}(\cdot), \mathbb{W}(\cdot)) \in \mathcal{A}^\Sigma$. This result can be derived by applying Proposition 3.2 in \cite{wang2025}, Remark 4.1.5. in \cite{Dragan2013book} and Proposition \ref{main:proposition}, with the detailed proofs omitted.
\end{remark}

\begin{theorem}
Assume the following conditions hold:
\begin{itemize}
    \item Assumption \ref{pf:ass_1} is satisfied.
    \item The set $\mathcal{A}^\Sigma$ is non-empty.
    \item Let $\mathbb{A}^{(0)}_{k}(\cdot) = \left(A^{(0)}_{k}(\cdot,1), \ldots, A^{(0)}_{k}(\cdot,N)\right)$ for $k=0,\ldots,r$. 
    Define:
    \[
    A^{(0)}_{0}(t,i) 
    \;=\; \hat{A}(t,i) - B_{02}(t,i)R_{22}(t,i)^{-1}L^{\top}_{2}(t,i),
    A^{(0)}_{k}(t,i) 
    \;=\; A_{k}(t,i) - B_{k2}(t,i)R_{22}(t,i)^{-1}L^{\top}_{2}(t,i)
    \]
    for all $(t,i) \in \mathbb{R}_+ \times \mathfrak{N}$ and $k=1,\ldots,r$. 
    There exists a tuple 
    $\mathbb{C}^{(0)}(\cdot)= \left(C^{(0)}(\cdot,1),\ldots, C^{(0)}(\cdot,N)\right)$ satisfying
    \[
    C^{(0) \top}(t,i)C^{(0)}(t,i) 
    \;=\; M^{}(t,i) - L^{}_2(t,i)R^{}_{22}(t,i)^{-1}L^{ \top}_2(t,i);\,\, \forall (t,i) \in \mathbb{R}_+ \times \mathfrak{N}
    \]
    such that the system 
    \[
    \bigl[ \mathbb{C}^{(0)}(\cdot); \mathbb{A}^{(0)}_{0}(\cdot), \ldots, \mathbb{A}^{(0)}_{r}(\cdot) \bigr]
    \]
    is \textit{stochastically detectable}.
\end{itemize}
Then we have,
\item[$1.$] For each $h \geq 1$, $X^{(h)}(\cdot,i)(1 \leq i \leq N)$ is well-defined as the unique minimal positive semidefinite solution of the Riccati differential equation \eqref{algorithm} that satisfy the sign conditions \ref{pf:sign_conditions_1}. Moreover, the following properties hold:
\begin{itemize}
    \item[$a_h$.] $X^{(h)}(\cdot,i)(1 \leq i \leq N)$ is a periodic function.
    \item[$b_h$.] Let $\mathbb{A}^{(h)}(\cdot) = \left(A^{(h)}(\cdot,1), \ldots, A^{(h)}(\cdot,N)\right)$ with $A^{(h)}(t,i) \;=\; \hat{A}(t,i) - B_{02}(t,i)R^{(h)}_{22}(t,i)^{-1}L^{(h) \top}_{2}(t,i)$ for all $(t,i) \in \mathbb{R}_+ \times \mathfrak{N}$. If exists a tuple $\mathbb{C}^{(h)}(\cdot)= \left(C^{(h)}(\cdot,1),\ldots, C^{(h)}(\cdot,N)\right)$ satisfying
    \[
    C^{(h) \top}(t,i)C^{(h)}(t,i) 
    \;=\; M^{(h)}(t,i) - L^{(h) }_2(t,i)R^{(h)}_{22}(t,i)^{-1}L^{(h) \top}_2(t,i);\,\, \forall (t,i) \in \mathbb{R}_+ \times \mathfrak{N}
    \]
    such that the deterministic system $\bigl[ \mathbb{C}^{(h)}(\cdot); \mathbb{A}^{(h)}(\cdot) \bigr]$ is detectable
    then $X^{(h)}(\cdot,i)(1 \leq i \leq N)$ coincides with the stabilizing solution of the Riccati differential equation \eqref{algorithm}.
    \item[$c_h$.] For all $(t,i) \in \mathbb{R}_+ \times \mathfrak{N}$, 
          \[
          0 \;=\; X^{(0)}(t,i) \; \preceq \; X^{(1)}(t,i) \; \preceq \; \cdots \; \preceq \; X^{(h-1)}(t,i) \; \preceq \; X^{(h)}(t,i) \; \preceq \; \cdots \; \preceq \; \tilde{X}(t,i),
          \]
          where $\mathbb{\tilde{X}}(t) = \bigl(\tilde{X}(t,1), \ldots, \tilde{X}(t,N)\bigr)\in \mathbb{S}^N_n$ is the unique stabilizing and $\theta$-periodic solution of Regime-Switching Stochastic GTRDEs \eqref{pf:gtrde}.
\end{itemize}   
\item[$2.$] For all $(t,i) \in \mathbb{R}_+ \times \mathfrak{N}$, 
          \[
          \lim_{h \to \infty} X^{(h)}(t,i) \;=\; \tilde{X}(t,i).
          \]
\end{theorem}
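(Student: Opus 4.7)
I would argue by induction on $h$ for part~1 and then obtain part~2 from the resulting monotone bounded sequence via Dini's theorem and passage to the limit in \eqref{algorithm}. The base case $h=0$ is immediate since $X^{(0)}\equiv 0$ is $\theta$-periodic and satisfies $0\preceq\tilde{X}(t,i)$. For the inductive step, assume the conclusions hold through step $h-1$. The first task is to verify well-posedness of the step-$h$ deterministic GTRDE: Remark~\ref{pr:pi_kw} shows that the increment $\mathbb{Q}^{(h)}-\mathbb{Q}$ is positive semidefinite; extracting the relevant principal submatrix and combining it with the block in Assumption~\ref{pf:ass_1}(3) yields $R^{(h)}_{22}(t,i)\succeq R_{22}(t,i)\succ 0$ and $M^{(h)}(t,i)-L^{(h)}_{2}(t,i)(R^{(h)}_{22}(t,i))^{-1}L^{(h)\top}_{2}(t,i)\succeq 0$. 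Hence Assumption~\ref{pf:ass_1} is preserved at step $h$, and the classical theory of the deterministic zero-sum LQ Riccati equation (Theorem~5.6.4 in \cite{Dragan2013book}, applied mode by mode) furnishes a unique minimal positive semidefinite solution $X^{(h)}(\cdot,i)$ satisfying the corresponding sign condition.

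Property~$a_{h}$ then follows from uniqueness of the minimal solution together with $\theta$-periodicity of the step-$h$ data, since $X^{(h)}(\cdot+\theta,i)$ is again a minimal solution and hence coincides with $X^{(h)}(\cdot,i)$. Property~$b_{h}$ follows from the standard identification of the minimal with the stabilizing solution under detectability. Property~$c_{h}$ is obtained from two applications of the comparison theorem (Theorem~\ref{comparison_theorem}). For $X^{(h-1)}\preceq X^{(h)}$, the inductive ordering $X^{(h-2)}\preceq X^{(h-1)}$ combined with Remark~\ref{pr:pi_kw} gives $\mathbb{Q}^{(h-1)}(t,i)\preceq\mathbb{Q}^{(h)}(t,i)$, and comparison yields the claim (the case $h=1$ being trivial as $X^{(0)}\equiv 0$). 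For $X^{(h)}\preceq\tilde{X}$, I would rewrite the stochastic GTRDE \eqref{pf:gtrde} at $\tilde{X}$ as a deterministic GTRDE of the step-$h$ form with enlarged cost data $\mathbb{Q}^{\tilde{X}}(t,i)$ obtained by replacing $X^{(h-1)}$ by $\tilde{X}$; since $X^{(h-1)}\preceq\tilde{X}$ produces $\mathbb{Q}^{(h)}(t,i)\preceq\mathbb{Q}^{\tilde{X}}(t,i)$, comparison once more yields $X^{(h)}\preceq\tilde{X}$.

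For part~2, the sequence $\{X^{(h)}(t,i)\}_{h\ge 0}$ is monotone increasing in the positive semidefinite order and bounded above by $\tilde{X}(t,i)$, hence converges pointwise to some $\theta$-periodic limit $X^{*}(t,i)\preceq\tilde{X}(t,i)$. Because each $X^{(h)}$ is continuous and the convergence is monotone on the compact interval $[0,\theta]$, Dini's theorem upgrades this convergence to uniform, after which the Riccati identity \eqref{algorithm} forces the derivatives $\dot{X}^{(h)}$ to converge uniformly as well. Passing to the limit in \eqref{algorithm} and using the identity $\hat{A}^{\top}X+X\hat{A}+\Pi_{1}[\mathbb{X}](i)=A_{0}^{\top}X+XA_{0}+\sum_{k=1}^{r}A_{k}^{\top}XA_{k}+\sum_{j=1}^{N}q_{ij}X(j)$ together with the analogous expressions for the cross and quadratic terms, one concludes that $X^{*}$ is a bounded $\theta$-periodic positive semidefinite solution of the stochastic GTRDE \eqref{pf:gtrde} satisfying the sign conditions \eqref{pf:sign_conditions_21}--\eqref{pf:sign_conditions_22}. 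Uniqueness of the bounded $\theta$-periodic stabilizing solution then forces $X^{*}=\tilde{X}$.

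\textbf{Main obstacle.} The technically delicate step is the comparison $X^{(h)}\preceq\tilde{X}$, because it juxtaposes a solution of a deterministic GTRDE (step~$h$) with a solution of the stochastic GTRDE. The bridge is to read \eqref{pf:gtrde} at $\tilde{X}$ as a deterministic GTRDE of the step-$h$ type with cost data $\mathbb{Q}^{\tilde{X}}$, and then to verify that both $\tilde{X}$ and $X^{(h)}$ satisfy the sign-condition hypothesis required by Theorem~\ref{comparison_theorem} on a common interval so that comparison is applicable. A secondary subtlety is the identification of the pointwise limit $X^{*}$ with the \emph{stabilizing} solution rather than merely some $\theta$-periodic solution: uniform positivity of $R^{(h)}_{22}$ inherited from $R_{22}\succ 0$ and uniform boundedness of the iterates guarantee that the sign conditions \eqref{pf:sign_conditions_21}--\eqref{pf:sign_conditions_22} pass to the limit, after which the uniqueness framework embodied in Proposition~\ref{main:proposition} (cf.\ Remark~\ref{main:proposition_kw}) closes the argument.
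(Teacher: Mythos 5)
Your skeleton (induction, two applications of the comparison theorem with the stochastic GTRDE at $\tilde{\mathbb{X}}$ rewritten as a deterministic one with enlarged cost $\mathbb{Q}^{\tilde{X}}$, then monotone bounded convergence) matches the paper, and your observation that $R^{(h)}_{22}\succeq R_{22}\succ0$ and $M^{(h)}-L^{(h)}_{2}(R^{(h)}_{22})^{-1}L^{(h)\top}_{2}\succeq0$ is correct. But there is a genuine gap in your well-posedness step. Because $R^{(h)}(t,i)$ has the indefinite signature $\diag(-I_{m_1},I_{m_2})$, preservation of Assumption \ref{pf:ass_1} at step $h$ is \emph{not} enough to guarantee that the deterministic game-theoretic Riccati equation \eqref{algorithm} admits a global (let alone minimal positive semidefinite) solution: one also needs a solvability certificate for the maximizing player, namely the non-emptiness of $\mathcal{A}^{\boldsymbol{\Sigma}^{(h)}}$, which is the hypothesis of the existence results (Corollary 3 and Theorem 2 of \cite{Dragan2020}) that the paper invokes. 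The heart of the paper's induction is precisely the transfer of this certificate: for $h=1$ one takes the LMI \eqref{LMIs_kw} witnessing $(\mathbb{K},\mathbb{W})\in\mathcal{A}^{\Sigma}$ and drops the positive semidefinite $\Pi$-block (Remark \ref{pr:pi_kw}) to obtain the LMI for $\boldsymbol{\Sigma}^{(1)}$; for $h\geq2$ one must first use Remark \ref{main:proposition_kw} and Proposition \ref{main:proposition} to produce a witness $Y(t,i)\succeq\tilde{X}(t,i)\succeq X^{(h-1)}(t,i)$, so that the subtracted block $\Pi_{\cdot}(t)[\mathbb{Y}-\mathbb{X}^{(h-1)}](i)\succeq0$ and the LMI for $\boldsymbol{\Sigma}^{(h)}$ follows. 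Your appeal to ``Theorem 5.6.4 applied mode by mode'' bypasses this entirely, and that theorem addresses the definite-sign situation arising \emph{after} the $(\mathbb{K},\mathbb{W})$-parameterization, not the indefinite game equation \eqref{algorithm} itself.

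A secondary gap is in your identification of the limit. You conclude $X^{*}=\tilde{X}$ from ``uniqueness of the bounded $\theta$-periodic stabilizing solution,'' but you have not shown that $X^{*}$ is stabilizing, and monotone limits of stabilizing solutions need not be stabilizing. The paper closes the argument differently: it shows $X^{*}$ is a global solution of \eqref{pf:gtrde} satisfying the sign conditions, and then uses the \emph{minimality} of $\tilde{X}$ among all such solutions (Theorems 2 and 3 of \cite{Dragan2020}) to get $\tilde{X}(t,i)\preceq X^{*}(t,i)$, which combined with $X^{*}(t,i)\preceq\tilde{X}(t,i)$ forces equality. Your Dini/uniform-convergence route for passing to the limit in the equation is a reasonable alternative to the paper's dominated-convergence argument, but the final identification must go through minimality rather than an unproven stabilizing property of $X^{*}$.
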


\begin{proof}
We use induction on $h = 1, 2, \ldots$ to establish that the Riccati differential equation \eqref{algorithm} are mathematically well-posed and possess minimal, positive semidefinite solutions satisfying conditions ($a_h$)-($c_h$).    

For $h = 1$, the Riccati differential equations \eqref{algorithm} reduce to
\begin{equation}
\label{proof_1}
\begin{split}
\frac{d}{dt}X(t,i) & + \hat{A}^{\top}(t,i)X(t,i) + X(t,i)\hat{A}(t,i) + M(t,i) \\
& - ( X(t,i)B_0(t,i) + L(t,i) ) R(t,i)^{-1} ( B_0^{\top}(t,i)X(t,i) + L^{\top}(t,i) ) = 0,t \geq 0; 1 \leq i \leq N.
\end{split} 
\end{equation}
Since the condition  $R_{22}(t,i) \succ 0; \forall (t,i) \in \mathbb{R}_+ \times \mathfrak{N} $ specified in Assumption \ref{pf:ass_1} and $\mathcal{A}^\Sigma$ is not empty we deduce that
\begin{equation*}
R_{11}(t,i) - R_{12}(t,i)R_{22}(t,i)^{-1}R_{12}^{\top}(t,i) \prec 0, \forall(t,i) \in \mathbb{R}_+ \times \mathfrak{N}.
\end{equation*}
This means the invertibility of the matrices $R(t,i)$ is guaranteed and  that signature of $R(t,i)$ matches that of the matrix $\mathrm{diag}(-\mathbb{I}_{m_1}, \mathbb{I}_{m_2})$. It can be found that the Riccati differential equations \eqref{proof_1} is a GTRDEs is related to the zero-sum game problem in a deterministic framework.

Let $(\mathbb{K}(\cdot), \mathbb{W}(\cdot)) \in \mathcal{A}^\Sigma$, we have $Y(t,i) \succ 0; \forall(t,i) \in \mathbb{R}_+ \times \mathfrak{N}$ satisfies the LMIs:
\begin{equation}
\label{proof_2}
\begin{aligned}
&\begin{pmatrix}
\frac{d}{dt}Y(t,i) + \hat{A}_{0K}^{\top}(t,i)Y(t,i) + Y(t,i)\hat{A}_{0K}(t,i) + M_{K}(t,i) & Y(t,i)B_{0W}(t,i) + L_{KW}(t,i) \\
B_{0W}^{\top}(t,i)Y(t,i)+ L_{KW}^{\top}(t,i) & R_{W}(t,i)
\end{pmatrix} \\
&+ \begin{pmatrix} \Pi_{1\mathbb{K}}(t)[\mathbb{Y}](i) & \Pi_{2\mathbb{KW}}(t)[\mathbb{Y}](i) \\ \Pi_{2\mathbb{KW}}(t)[\mathbb{Y}](i)^{\top} & \Pi_{3\mathbb{W}}(t)[\mathbb{Y}](i) \end{pmatrix} \prec 0, \forall(t,i) \in \mathbb{R}_+ \times \mathfrak{N}.
\end{aligned}
\end{equation}
From Remark \ref{pr:pi_kw}, we have
\begin{align*}
     \begin{pmatrix} \Pi_{1\mathbb{K}}(t)[\mathbb{Y}](i) & \Pi_{2\mathbb{KW}}(t)[\mathbb{Y}](i) \\ \Pi_{2\mathbb{KW}}(t)[\mathbb{Y}](i)^{\top} & \Pi_{3\mathbb{W}}(t)[\mathbb{Y}](i) \end{pmatrix} \succeq 0, \forall(t,i) \in \mathbb{R}_+ \times \mathfrak{N},
\end{align*}
then we obtain 
\begin{equation}
\label{proof_3}
\begin{aligned}
    \begin{pmatrix}
    \frac{d}{dt}Y(t,i) + \hat{A}_{0K}^{\top}(t,i)Y(t,i) + Y(t,i)\hat{A}_{0K}(t,i) + M_{K}(t,i) & Y(t,i)B_{0W}(t,i) + L_{KW}(t,i) \\
    B_{0W}^{\top}(t,i)Y(t,i)+ L_{KW}^{\top}(t,i) & R_{W}(t,i)
    \end{pmatrix}  \prec 0,\forall(t,i) \in \mathbb{R}_+ \times \mathfrak{N}.
\end{aligned}    
\end{equation}

The above LMIs \eqref{proof_3} is associated with $\boldsymbol{\Sigma}^{(1)}= (\mathbb{\hat{A}}(\cdot),\mathbb{B}_0(\cdot),\mathbb{Q}^{(1)}(\cdot))$. This implies that $\mathcal{A}^{\boldsymbol{\Sigma}^{(1)}}$ is non-empty if $\mathcal{A}^{\Sigma}$ is non-empty. Therefore, based on Corollary 3 in \cite{Dragan2020}, we deduce that if $X^{(1)}_{\tau}(\cdot,i)(1 \leq i \leq N)$ is the solution to the GTRDEs \eqref{proof_1} satisfying the condition $X^{(1)}_{\tau}(\tau,i) = 0$, is well-defined on $[0, \tau]$ for all $\tau > 0$.
From the Theorem 2. in \cite{Dragan2020} to the special case of the Stochastic GTRDEs, we further conclude that the function $X^{(1)}(\cdot,i)(1 \leq i \leq N)$ defined as:
\begin{equation}
X^{(1)}(t,i) = \lim_{\tau \to \infty} X^{(1)}_{\tau}(t,i), \forall \,t \in \mathbb{R}_+ ,
\end{equation}
is the unique minimal solution among all positive semidefinite solutions to the GTRDEs \eqref{proof_1} that satisfy the sign conditions \ref{pf:sign_conditions_1}. Additionally, since the parameters involved in the GTRDEs \eqref{proof_1} are periodic, the mapping $t \mapsto X^{(h)}(t,i)$ is a periodic function with period $\theta$.
Based on the developments of Chapter 4 in \cite{Dragan2013book} one may deduce that the system 
$
\left[ \mathbb{C}^{(0)}(\cdot); \mathbb{A}^{(0)}_{0}(\cdot) \right]
$
is detectable in the deterministic sense if the system 
$
\left[ \mathbb{C}^{(0)}(\cdot); \mathbb{A}^{(0)}_{0}(\cdot), \ldots, \mathbb{A}^{(0)}_{r}(\cdot) \right]
$
is stochastically detectable. By the Theorem 3. in \cite{Dragan2020}, $X^{(1)}(\cdot,i)$ emerges as the unique bounded and stabilizing solution to the GTRDEs \eqref{algorithm} in the special case where $h = 1$.

The assumptions in this theorem satisfy the assumptions of Theorem 3 in \cite{Dragan2020}, so we can conclude that the Regime-Switching Stochastic GTRDEs \eqref{pf:gtrde} has the unique stabilizing and $\theta$-periodic solution of Regime-Switching Stochastic GTRDEs \eqref{pf:gtrde} that satisfy the sign conditions \ref{pf:sign_conditions_1}. 
Observe that the Regime-Switching Stochastic GTRDEs \eqref{pf:gtrde} satisfied by its stabilizing solution $\tilde{\mathbb{X}}(\cdot)$ may be rewritten as  
\begin{equation}
\label{proof_4}
\begin{aligned}
&\frac{d}{dt}\tilde{X}(t,i) + \hat{A}^{\top}(t,i)\tilde{X}(t,i) + \tilde{X}(t,i)\hat{A}(t,i) + M(t,i)+\Pi_1(t)\left[\tilde{\mathbb{X}}\right](i)\\
&- \left( \tilde{X}(t,i)B_0(t,i) +\Pi_2(t)\left[\tilde{\mathbb{X}}\right](i)+ L(t,i) \right) \left(R(t,i)+\Pi_3(t)\left[\tilde{\mathbb{X}}\right](i) \right) ^{-1}\\
& \times \left( B_0^{\top}(t,i)\tilde{X}(t,i) +\Pi_2(t)\left[\tilde{\mathbb{X}}\right](i)^{\top}+ L(t,i)^{\top} \right) =0,t \geq 0; 1 \leq i \leq N.     
\end{aligned}
\end{equation}
Since $\tilde{X}(t,i) \succeq 0; \forall(t,i) \in \mathbb{R}_+ \times \mathfrak{N}$ we deduce from Remark \ref{pr:pi_kw} that
\begin{align*}
     \begin{pmatrix} \Pi_1(t)[\mathbb{\tilde{X}}](i) & \Pi_2(t)[\mathbb{\tilde{X}}](i) \\ \Pi_2(t)[\mathbb{\tilde{X}}](i)^{\top} & \Pi_3(t)[\mathbb{\tilde{X}}](i) \end{pmatrix} \succeq 0, \forall(t,i) \in \mathbb{R}_+ \times \mathfrak{N},
\end{align*}
then
\[
    \mathbb{Q}(t)[\mathbb{\tilde{X}}](i)=\mathbb{Q}(t)+\begin{pmatrix} \Pi_1(t)[\mathbb{\tilde{X}}](i) & \Pi_2(t)[\mathbb{\tilde{X}}](i) \\ \Pi_2(t)[\mathbb{\tilde{X}}](i)^{\top} & \Pi_3(t)[\mathbb{\tilde{X}}](i) \end{pmatrix} \succeq \mathbb{Q}(t)[\mathbb{X}^{(0)}](i) = \begin{pmatrix} M(t,i) & L(t,i) \\ L^{\top}(t,i) & R(t,i) \end{pmatrix}.
\]
By applying Lemma \ref{comparison_theorem} to the special case of the Stochastic GTRDEs \eqref{proof_1} and \eqref{proof_4} we may infer that $ 0 \preceq X^{(1)}_{\tau}(t,i) \preceq \tilde{X}(t,i) $ for all $t \in [0, \tau]$ where $\tau > 0$ and $1 \leq i \leq N$. Taking the limit for $\tau \to \infty$, we obtain $0 \preceq X^{(1)}(t,i) \preceq X(t,i);\forall (t,i) \in  \mathbb{R}_+ \times \mathfrak{N}$.

Consequently, the claims $(a_1.)-(c_1.)$ specified in the statement are satisfied. 

For $h=l(l\geq 2)$,we assume the functions $X^{(l)}(\cdot,i)(1 \leq i \leq N)$ are well defined as 
the unique minimal solution among all positive semidefinite solutions to the GTRDEs \eqref{algorithm} that satisfy the sign conditions \ref{pf:sign_conditions_1}, and the properties $(a_l.)-(c_l.)$ from the statement hold. We show now that for $h = l+1$, the GTRDEs \eqref{algorithm} admit the unique minimal solution $X^{(l+1)}(\cdot,i)(1 \leq i \leq N)$ among all positive semidefinite solutions to the GTRDEs \eqref{algorithm} that satisfy the sign conditions \ref{pf:sign_conditions_1}. Moreover,
the solutions is $\theta$-periodic function and the properties $(a_{l+1}.)-(c_{l+1}.)$ hold.

For $h = l+1$, the GTRDEs \eqref{algorithm} reduce to
\begin{equation}
\label{proof_5}
\begin{split}
&\frac{d}{dt}X(t,i)  + \hat{A}^{\top}(t,i)X(t,i) + X(t,i)\hat{A}(t,i) + M^{(l+1)}(t,i) \\
& - \left( X(t,i)B_0(t,i) + L^{(l+1)}(t,i) \right) R^{(l+1)}(t,i)^{-1} \left( B_0^{\top}(t,i)X(t,i) + L^{(l+1) \top}(t,i) \right) = 0,t \geq 0; 1 \leq i \leq N.
\end{split} 
\end{equation}
From Remark \ref{main:proposition_kw}, let $\mathbb{K}(t) = \bigl(K(t,1), \ldots, K(t,N)\bigr),\mathbb{W}(t) = \bigl(W(t,1), \ldots, W(t,N)\bigr)$ for all $t \in \mathbb{R}_+$ with 
\[K(t,i)=-(R_{22}(t,i)+\sum_{k=1}^{r}B^{\top}_{k2}(t,i)\tilde{X}(t,i)B_{k2}(t,i) )^{-1}(B_{02}^{\top}(t,i)\tilde{X}(t,i) + \sum_{k=1}^{r}B_{k2}^{\top}(t,i)\tilde{X}(t,i)A_k(t,i)),\,\text{and}\] 
\[W(t,i)=-(R_{22}(t,i)+\sum_{k=1}^{r}B^{\top}_{k2}(t,i)\tilde{X}(t,i)B_{k2}(t,i))^{-1}(R_{21}(t,i)+\sum_{k=1}^{r}B^{\top}_{k2}(t,i)\tilde{X}(t,i)B_{k1}(t,i)),\]
we have $(\mathbb{K}(\cdot), \mathbb{W}(\cdot)) \in \mathcal{A}^\Sigma$.
By using Proposition \ref{main:proposition}, we have $Y(t,i) \succeq \tilde{X}(t,i)$ and $Y(t,i) \succ 0; \forall(t,i) \in \mathbb{R}_+ \times \mathfrak{N}$ satisfies the LMIs:
\begin{equation}
\label{proof_6}
\begin{aligned}
&\begin{pmatrix}
\frac{d}{dt}Y(t,i) + \hat{A}_{0K}^{\top}(t,i)Y(t,i) + Y(t,i)\hat{A}_{0K}(t,i) + M^{(l+1)}_{K}(t,i) & Y(t,i)B_{0W}(t,i) + L^{(l+1)}_{KW}(t,i) \\
B_{0W}^{\top}(t,i)Y(t,i)+ L^{(l+1) \top}_{KW}(t,i) & R^{(l+1)}_{W}(t,i)
\end{pmatrix} \\
&+ \begin{pmatrix} \Pi_{1\mathbb{K}}(t)[\mathbb{Y}-\mathbb{X}^{(l)}](i) & \Pi_{2\mathbb{KW}}(t)[\mathbb{Y}-\mathbb{X}^{(l)}](i) \\ \Pi_{2\mathbb{KW}}(t)[\mathbb{Y}-\mathbb{X}^{(l)}](i)^{\top} & \Pi_{3\mathbb{W}}(t)[\mathbb{Y}-\mathbb{X}^{(l)}](i) \end{pmatrix} \prec 0, \forall(t,i) \in \mathbb{R}_+ \times \mathfrak{N}.
\end{aligned}
\end{equation}
From $(c_l.)$ and Remark \ref{pr:pi_kw}, we have $Y(t,i) \succeq \tilde{X}(t,i) \succeq X^{(l)}(t,i);\forall (t,i) \in  \mathbb{R}_+ \times \mathfrak{N}$ and
\begin{align*}
     \begin{pmatrix} \Pi_{1\mathbb{K}}(t)[\mathbb{Y}-\mathbb{X}^{(l)}](i) & \Pi_{2\mathbb{KW}}(t)[\mathbb{Y}-\mathbb{X}^{(l)}](i) \\ \Pi_{2\mathbb{KW}}(t)[\mathbb{Y}-\mathbb{X}^{(l)}](i)^{\top} & \Pi_{3\mathbb{W}}(t)[\mathbb{Y}-\mathbb{X}^{(l)}](i) \end{pmatrix} \succeq 0, \forall(t,i) \in \mathbb{R}_+ \times \mathfrak{N},
\end{align*}
Then we obtain
\begin{equation}
\label{proof_7}
\begin{aligned}
    \begin{pmatrix}
    \frac{d}{dt}Y(t,i) + \hat{A}_{0K}^{\top}(t,i)Y(t,i) + Y(t,i)\hat{A}_{0K}(t,i) + M^{(l+1)}_{K}(t,i) & Y(t,i)B_{0W}(t,i) + L^{(l+1)}_{KW}(t,i) \\
    B_{0W}^{\top}(t,i)Y(t,i)+ L^{(l+1) \top}_{KW}(t,i) & R^{(l+1)}_{W}(t,i)
    \end{pmatrix}  \prec 0
\end{aligned}    
\end{equation}
for all $(t,i) \in \mathbb{R}_+ \times \mathfrak{N}$.
The above LMIs \eqref{proof_7} is associated with $\boldsymbol{\Sigma}^{(l+1)}= (\mathbb{\hat{A}}(\cdot),\mathbb{B}_0(\cdot),\mathbb{Q}^{(l+1)}(\cdot))$. This implies that $\mathcal{A}^{\boldsymbol{\Sigma}^{(l+1)}}$ is non-empty if $\mathcal{A}^{\Sigma}$ is non-empty. Therefore, based on Corollary 3 in \cite{Dragan2020}, we deduce that if $X^{(l+1)}_{\tau}(\cdot,i)(1 \leq i \leq N)$ is the solution to GTRDEs \eqref{proof_5} satisfying the condition $X^{(l+1)}_{\tau}(\tau,i) = 0$, is well-defined on $[0, \tau]$ for all $\tau > 0$.
By applying Theorem 2. in \cite{Dragan2020} to the special case of the Stochastic GTRDEs, we further conclude that the function $X^{(l+1)}(\cdot,i)$ defined as:
\begin{equation}
X^{(l+1)}(t,i) = \lim_{\tau \to \infty} X^{(l+1)}_{\tau}(t,i), \forall(t,i) \in \mathbb{R}_+ \times \mathfrak{N},
\end{equation}
is the unique minimal solution among all positive semidefinite solutions to the GTRDEs \eqref{proof_5} that satisfy the sign conditions \ref{pf:sign_conditions_1}. Additionally, the mapping $t \mapsto X^{(l+1)}(t,i)$ is a periodic function with period $\theta$.
As a deterministic special case within the stochastic framework, we apply Theorem 3. from \cite{Dragan2020}. Under the specified assumption for $(b_{l+1}.) $, it follows that $X^{(l+1)}(\cdot,i)$ emerges as the unique bounded and stabilizing solution to the GTRDEs \eqref{algorithm} in the special case where $h = l+1$.

Since $\tilde{X}(t,i)-X^{(l)}(t,i) \succeq 0,X^{(l)}(t,i)-X^{(l-1)}(t,i) \succeq 0; \forall(t,i) \in \mathbb{R}_+ \times \mathfrak{N}$ in $(c_l.)$ and Remark \ref{pr:pi_kw}, we have
\begin{align*}
     \begin{pmatrix} \Pi_1(t)[\mathbb{\tilde{X}}-\mathbb{X}^{(l)}](i) & \Pi_2(t)[\mathbb{\tilde{X}}-\mathbb{X}^{(l)}](i) \\ \Pi_2(t)[\mathbb{\tilde{X}}-\mathbb{X}^{(l)}](i)^{\top} & \Pi_3(t)[\mathbb{\tilde{X}}-\mathbb{X}^{(l)}](i) \end{pmatrix} \succeq 0,\forall(t,i) \in \mathbb{R}_+ \times \mathfrak{N},
\end{align*}
and 
\begin{align*}
     \begin{pmatrix} \Pi_1(t)[\mathbb{\mathbb{X}}^{(l)}-\mathbb{X}^{(l-1)}](i) & \Pi_2(t)[\mathbb{\mathbb{X}}^{(l)}-\mathbb{X}^{(l-1)}](i) \\ \Pi_2(t)[\mathbb{\mathbb{X}}^{(l)}-\mathbb{X}^{(l-1)}](i)^{\top} & \Pi_3(t)[\mathbb{\mathbb{X}}^{(l)}-\mathbb{X}^{(l-1)}](i) \end{pmatrix} \succeq 0, \forall(t,i) \in \mathbb{R}_+ \times \mathfrak{N}.
\end{align*}
This means
\[
    \mathbb{Q}(t)[\mathbb{\tilde{X}}](i) \succeq \mathbb{Q}(t)[\mathbb{X}^{(l)}](i)= \mathbb{Q}^{(l+1)}(t,i);\mathbb{Q}^{(l+1)}(t,i)= \mathbb{Q}(t)[\mathbb{X}^{(l)}](i) \succeq \mathbb{Q}(t)[\mathbb{X}^{(l-1)}](i)=\mathbb{Q}^{(l)}(t,i) .
\]
By applying Lemma \ref{comparison_theorem} in the special case of Stochastic GTRDEs \eqref{proof_4},\eqref{proof_5} and \eqref{algorithm} with $h=l$, we may infer that $X^{{(l+1)}}_{\tau}(t,i) \preceq \tilde{X}(t,i) $ and $X^{{(l)}}_{\tau}(t,i) \preceq X^{{(l+1)}}_{\tau}(t,i) $ for all $t \in [0, \tau]$ where $\tau > 0$ and $1 \leq i \leq N$. Taking the limit for $\tau \to \infty$, we obtain $0 \preceq X^{(l)}(t,i) \preceq X^{(l+1)}(t,i) \preceq \tilde{X}(t,i);\forall (t,i) \in \mathbb{R}_+ \times \mathfrak{N}$.

Consequently, the claims $(a_{l+1}.)-(c_{l+1}.)$ specified in the statement are satisfied. 

Finally, since the sequence $\{\mathbb{X}^{(h)}(\cdot)\}_{h\geq0}$ is monotonic and bounded above by $\tilde{\mathbb{X}}(\cdot)$, it converges pointwise to $\mathbb{X}^{(*)}(\cdot)$, where $X^{(*)}(t,i) = \lim_{h \to \infty} X^{(h)}(t,i) \preceq \tilde{X}(t,i)$ for all $(t,i) \in \mathbb{R}_+ \times \mathfrak{N}$. Using standard arguments based on Lebesgue's dominated convergence theorem, one can show that the mapping \[ t \mapsto \mathbb{X}^{(*)}(t) = \left(X^{(*)}(t,1), \ldots, X^{(*)}(t,N)\right) \] constitutes a global solution to the Regime-Switching Stochastic GTRDEs \eqref{pf:gtrde} and satisfies the sign conditions \ref{pf:sign_conditions_1}.
By Theorem 2. and Theorem 3. in \cite{Dragan2020}, we know that $\tilde{X}(t,i) \preceq X^{(*)}(t,i)$ for all $(t,i) \in \mathbb{R}_+ \times \mathfrak{N}$. This allows us to conclude that $X^{(*)}(t,i) = \tilde{X}(t,i)$ for all $(t,i) \in \mathbb{R}_+ \times \mathfrak{N}$, thereby completing the proof.
\end{proof}

\section{Numerical Experiments}
\label{sec:numerical_experiments}

Given that regime-switching and periodic-coefficient problems can be decomposed into deterministic subproblems, we perform extensive numerical experiments with randomly generated system parameters across various system modes and dimensions to validate the effectiveness and robustness of the proposed algorithm. Key parameter settings are as follows:

\begin{table}[htbp]
\centering
\caption{Summary of experimental parameters}
\label{tab:param_summary}
\begin{tabular}{@{}p{6cm}p{9cm}@{}}
\toprule
Parameter ($i=1,2$) & Setting \\
\midrule
Convergence tolerance & $1 \times 10^{-12}$ \\
Range of system dimensions ($n$) & From 1 to 20 \\
Trials per dimension & Fixed at 1,000 \\
Total number of trials & 20,000 (20 dimensions $\times$ 1,000 trials) \\
$n \times n$ matrices $A_0(i)$, $A_1(i)$, $A_2(i)$ & Randomly generated with elements following the standard normal distribution\\
$n \times n$ matrices $B_{01}(i)$, $B_{02}(i)$ & $B_{01}(i) = 4I_{n} - 0.5H_1(i)$ and $B_{02}(i) = 7I_{n} + 0.5H_2(i)$, where $H_1(i)$ and $H_2(i)$ are random matrices with entries uniformly distributed over $[0, 1]$ \\
$n \times n$ matrices $B_{11}(i)$, $B_{12}(i)$, $B_{21}(i)$, $B_{22}(i)$ & Randomly generated with elements following the uniformly distributed over $[0, 0.01]$\\
$n \times n$ matrix $R_{11}(i)$ & $-7I_{n} - U_{11}^{\top}(i)U_{11}(i)$, where $U_{11}(i)$ is a random matrix with entries uniformly distributed over $[0, 1]$ \\
$n \times n$ matrix  $R_{22}(i)$ & $4I_{n} + U_{22}^{\top}(i)U_{22}(i)$, where $U_{22}(i)$ is a random matrix with entries uniformly distributed over $[0, 1]$ \\
Matrices $R_{12}(i),R_{21}(i), L_1(i)$, $L_2(i)$ & Randomly generated with elements following the uniform distribution over $[0, 0.1]$ \\
Matrix $M(i)$ & Calculated as $M(i) = U^\top(i) U(i) + 0.1I_{n \times n}$, where $U(i)$ is random matrices with elements following the standard normal distribution\\
$Q = (q_{ij})_{2 \times 2}$ & The generator matrix is generated by creating $1$ random non-negative numbers as transition rates from each state to other states, with the diagonal elements set to the negative of the sum of the non-diagonal elements in the corresponding row.\\
\bottomrule
\end{tabular}
\end{table}

\begin{figure}[htbp]
    \centering
    \includegraphics[width=0.9\textwidth]{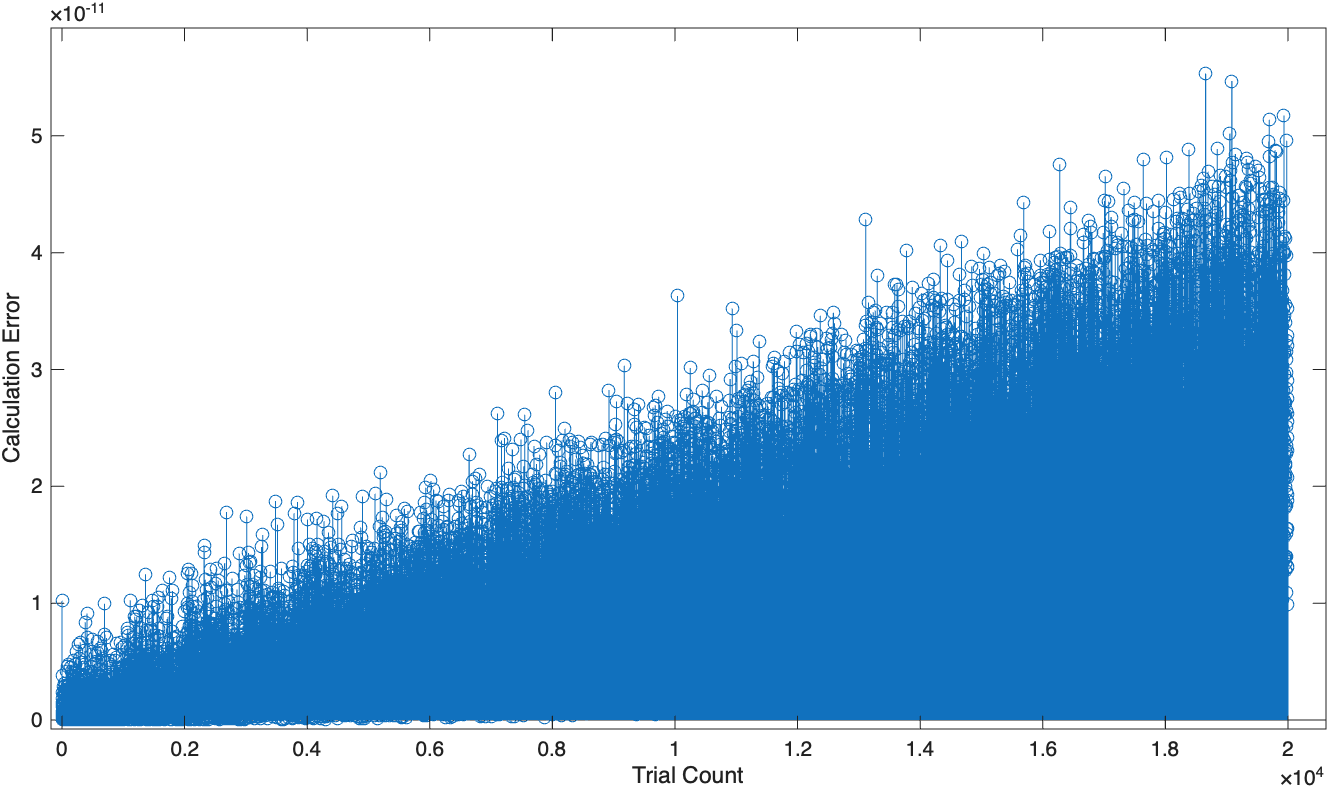} 
    \caption{The error of the Stochastic GTRDEs corresponding to System Mode 1}
    \label{fig:Mode_1}
\end{figure}
\begin{figure}[htbp]
    \centering
    \includegraphics[width=0.9\textwidth]{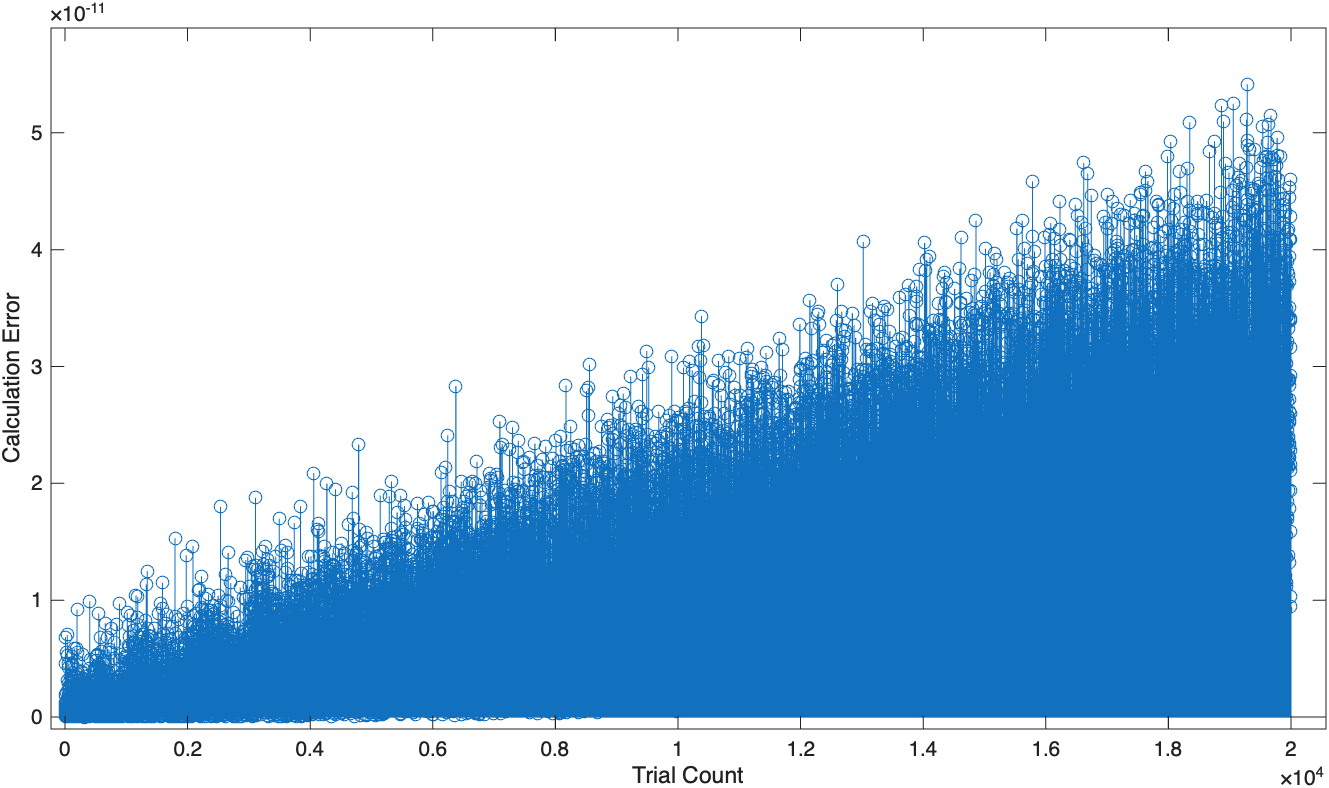} 
    \caption{The error of the Stochastic GTRDEs corresponding to System Mode 2}
    \label{fig:Mode_2}
\end{figure}

Under the default random seed setting in MATLAB, we conducted 1000 experiments for each system with dimensions ranging from 1 to 20. 
Figures \ref{fig:Mode_1} and \ref{fig:Mode_2} show how error levels vary across the two system modes as both dimensionality and trial count increase. Overall, it can be observed that within 1000 trials at the same dimension, the algorithm is stable and effective, with comparable error levels. As the system dimension increases, the error tends to rise, which can be attributed to the accumulation of errors from more dimensions. However, the overall error remains within an acceptable range.

\clearpage
\printbibliography

@book{Basar1999,
  title={Dynamic Noncooperative Game Theory},
  author={Basar, Tamer and Olsder, Geert Jan},
  volume={23},
  year={1999},
  publisher={SIAM},
  abstract = {Recent interest in biological games and mathematical finance make this classic 1982 text a necessity once again. Unlike other books in the field, this text provides an overview of the analysis of dynamic/differential zero-sum and nonzero-sum games and simultaneously stresses the role of different information patterns.The first edition was fully revised in 1995, adding new topics such as randomized strategies, finite games with integrated decisions, and refinements of Nash equilibrium. Readers can now look forward to even more recent results in this unabridged, revised SIAM Classics edition. Topics covered include static and dynamic noncooperative game theory, with an emphasis on the interplay between dynamic information patterns and structural properties of several different types of equilibria; Nash and Stackelberg solution concepts; multi-act games; Braess paradox; differential games; the relationship between the existence of solutions of Riccati equations and the existence of Nash equilibrium solutions; and infinite-horizon differential games.}
}

@article{Mou2006,
title = {Two-person zero-sum linear quadratic stochastic differential games by a Hilbert space method},
journal = {Journal of Industrial and Management Optimization},
volume = {2},
number = {1},
pages = {95-117},
year = {2006},
issn = {1547-5816},
doi = {10.3934/jimo.2006.2.95},
url = {https://www.aimsciences.org/article/id/a74e19cb-d32f-47b8-ad14-61f511019e76},
author = {Libin Mou and Jiongmin Yong},
keywords = {Stochastic games, open-loop controls, saddle points, linear-quadratic, Hilbert method},
abstract = {An open-loop two-person zero-sum linear quadratic (LQ for short) stochastic differential game is considered. The controls for both players are allowed to appear in both the drift and diffusion of the state equation, the weighting matrices in the payoff/cost functional are not assumed to be definite/non-singular, and the cross-terms between two controls are allowed to appear. A forward-backward stochastic differential equation (FBSDE, for short) and a generalized differential Riccati equation are introduced, whose solvability leads to the existence of the open-loop saddle points for the corresponding two-person zero-sum LQ stochastic differential game, under some additional mild conditions. The main idea is a thorough study of general two-person zero-sum LQ games in Hilbert spaces.}
}

@book{Sun2020_book,
  author    = {Sun, Jingrui
               and Yong, Jiongmin},
  title     = {Linear-Quadratic Two-Person Differential Games},
  booktitle = {Stochastic Linear-Quadratic Optimal Control Theory: Differential Games and Mean-Field Problems},
  year      = {2020},
  publisher = {Springer International Publishing},
  address   = {Cham},
  pages     = {15--67},
  abstract  = {The purpose of this chapter is to develop a theory for stochastic linear-quadratic two-person differential games. Open-loop and closed-loop Nash equilibria are explored in the context of nonzero-sum and zero-sum differential games. The existence of an open-loop Nash equilibrium is characterized in terms of a system of constrained forward-backward stochastic differential equations, and the existence of a closed-loop Nash equilibrium is characterized by the solvability of a system of coupled symmetric differential Riccati equations. It is shown that in the nonzero-sum case, the closed-loop representation for open-loop Nash equilibria is different from the outcome of closed-loop Nash equilibria in general, whereas they coincide in the zero-sum case when both exist. Some results for infinite-horizon zero-sum differential games are also established in terms of algebraic Riccati equation.},
  isbn      = {978-3-030-48306-7},
  doi       = {10.1007/978-3-030-48306-7_2},
  url       = {https://doi.org/10.1007/978-3-030-48306-7_2}
}

@article{Lv2020,
title = {Two-player zero-sum stochastic differential games with regime switching},
journal = {Automatica},
volume = {114},
pages = {108819},
year = {2020},
issn = {0005-1098},
doi = {https://doi.org/10.1016/j.automatica.2020.108819},
url = {https://www.sciencedirect.com/science/article/pii/S0005109820300170},
author = {Siyu Lv},
keywords = {Stochastic differential games, Markov chains, Dynamic programming principle, HJBI equations, Viscosity solutions},
abstract = {This paper is concerned with the two-player zero-sum stochastic differential game in a regime switching model with an infinite horizon. The state of the system is characterized by a number of diffusions coupled by a continuous-time finite-state Markov chain. Based on the dynamic programming principle (DPP), the lower and upper value functions are shown to be the unique viscosity solutions of the associated lower and upper Hamilton-Jacobi-Bellman-Isaacs (HJBI) equations, respectively. Moreover, the lower and upper value functions coincide under the Isaacs' condition, which implies that the game admits a value. All the proofs in this paper are markedly different from those for the case when there is no regime switching.}
}

@article{Li2021,
author = {Li, J. Y. and Tang, M. N.},
title = {A BSDE Approach to Stochastic Differential Games with Regime Switching},
journal = {Mathematical Problems in Engineering},
volume = {2021},
number = {1},
pages = {9930142},
doi = {https://doi.org/10.1155/2021/9930142},
url = {https://onlinelibrary.wiley.com/doi/abs/10.1155/2021/9930142},
eprint = {https://onlinelibrary.wiley.com/doi/pdf/10.1155/2021/9930142},
abstract = {In this paper, we study a two-player zero-sum stochastic differential game with regime switching in the framework of forward-backward stochastic differential equations on a finite time horizon. By means of backward stochastic differential equation methods, in particular that of the notion from stochastic backward semigroups, we prove a dynamic programming principle for both the upper and the lower value functions of the game. Based on the dynamic programming principle, the upper and the lower value functions are shown to be the unique viscosity solutions of the associated upper and lower Hamilton-Jacobi-Bellman-Isaacs equations.},
year = {2021}
}

@Article{Liu2024,
AUTHOR = {Liu, Yongxin and Min, Hui},
TITLE = {Two-Player Nonzero-Sum Stochastic Differential Games with Switching Controls},
JOURNAL = {Mathematics},
VOLUME = {12},
YEAR = {2024},
NUMBER = {24},
ARTICLE-NUMBER = {3976},
URL = {https://www.mdpi.com/2227-7390/12/24/3976},
ISSN = {2227-7390},
ABSTRACT = {In this paper, a two-player nonzero-sum stochastic differential game problem is studied with both players using switching controls. A verification theorem associated with a set of variational inequalities is established as a sufficient criterion for Nash equilibrium, in which the equilibrium switching strategies for the two players, indicating when and where it is optimal to switch, are characterized in terms of the so-called switching regions and continuation regions. The verification theorem is proved in a piecewise way along the sequence of total decision times of the two players. Then, some detailed explanations are also provided to illustrate the idea why the conditions are imposed in the verification theorem.},
DOI = {10.3390/math12243976}
}

@article{Li2024,
title = {Two-player zero-sum stochastic differential games with regime switching and corresponding Hamilton-Jacobi-Bellman-Isaacs' equations},
journal = {Communications on Pure and Applied Analysis},
volume = {23},
number = {8},
pages = {1140-1166},
year = {2024},
issn = {1534-0392},
doi = {10.3934/cpaa.2024051},
url = {https://www.aimsciences.org/article/id/6687b72b7289e27fc10d8e86},
author = {Li Guo and Zhen Wu},
keywords = {Stochastic differential games, Markov chains, dynamic programming principle, HJBI equations},
abstract = {In this paper, we study the two-player zero-sum stochastic differential games with regime switching. The stochastic system is driven by Brownian motion and Markov chains. The cost functionals are defined by controlled backward stochastic differential equations (BSDEs). In this framework, we allow the control processes to depend on the information from the past. By making full use of the construction of Markov chains, we prove that the upper value function and lower value function are indeed deterministic, while the cost functionals are random. We establish the dynamic programming principle (DPP), and obtain that the lower and upper value functions are the unique viscosity solutions of the corresponding systems for partial differential equations (PDEs) of Hamilton-Jacobi-Bellman-Isaacs' (HJBI) type.}
}

@article{Lv2025,
title = {A zero-sum hybrid stochastic differential game with switching controls},
journal = {Mathematical Control and Related Fields},
volume = {15},
number = {1},
pages = {206-224},
year = {2025},
issn = {2156-8472},
doi = {10.3934/mcrf.2024007},
url = {https://www.aimsciences.org/article/id/65ee5204c26d215a60506922},
author = {Siyu Lv and Jie Xiong},
keywords = {Stochastic differential game, markov chain, switching control, HJBI equation, viscosity solution, verification theorem},
abstract = {In this paper, we study a zero-sum stochastic differential game in an infinite horizon, where the state equation consists of a number of diffusions <i>coupled</i> by a Markov chain and both players in the game employ switching controls. Based on the dynamic programming principle (DPP), the lower and upper value functions of the game are characterized as the unique <i>viscosity solution</i> of the associated Hamilton-Jacobi-Bellman-Isaacs (HJBI) equation, which turns out to be two systems of <i>variational inequalities</i> with bilaterally inner-connected obstacles; thus the two value functions coincide and the game admits a value. Moreover, a verification theorem as a <i>sufficient condition</i> for Nash equilibriums is established, in which the equilibrium switching strategies for the two players are given in terms of the obstacle part of the HJBI equation.}
}

@article{Michael2006,
title = {Generalized Riccati equations arising in stochastic games},
journal = {Linear Algebra and its Applications},
volume = {416},
number = {2},
pages = {710-723},
year = {2006},
issn = {0024-3795},
doi = {https://doi.org/10.1016/j.laa.2005.12.011},
url = {https://www.sciencedirect.com/science/article/pii/S0024379505006063},
author = {Michael McAsey and Libin Mou},
keywords = {Riccati equation, Comparison theorem, Upper and lower solution, Monotonicity, Existence},
abstract = {We study a class of rational matrix differential equations that generalize the Riccati differential equations. The generalization involves replacing positive definite “weighting” matrices in the usual Riccati equations with either semidefinite or indefinite matrices that arise in linear quadratic control problems and differential games-both stochastic and deterministic. The purpose of this paper is to prove some fundamental properties such as comparison, monotonicity and existence theorems. These properties are well known for classical Riccati differential equations when certain matrices are assumed definite. As applications, we obtain conditions for the existence of solutions to the algebraic Riccati equation and to equations with periodic coefficients.}
}

@article{Ungureanu2012,
  title={Nonlinear differential equations of Riccati type on ordered Banach spaces},
  author={Ungureanu, VM and Dragan, V},
  journal={Electron. J. Qual. Theory Differ. Equ., Proc. 9'th Coll. QTDE},
  volume={17},
  pages={1--22},
  year={2012},
  abstract = {In this paper we consider a general class of time-varying nonlinear differential equations on infinite dimensional ordered Banach spaces, which includes as special cases many known differential Riccati equations of optimal control. Using a linear matrix inequalities (LMIs) approach we provide necessary and sufficient conditions for the existence of some global solutions such as maximal, stabilizing and minimal solutions for this class of generalized Riccati equations. The obtained results extend to infinite dimensions and unify corresponding results in the literature. They provide useful tools for solving infinite-time linear quadratic (LQ) control problems for linear differential systems affected by countably-infinite-state Markovian jumps and/or multiplicative noise.}
}

@article{Dragan2020,
author = {Dragan, V. and Aberkane, S. and Morozan, T.},
title = {On the bounded and stabilizing solution of a generalized Riccati differential equation arising in connection with a zero-sum linear quadratic stochastic differential game},
journal = {Optimal Control Applications and Methods},
volume = {41},
number = {2},
pages = {640-667},
keywords = {generalized Riccati differential equations, stabilizing solution, time-varying, zero-sum differential games},
doi = {https://doi.org/10.1002/oca.2563},
url = {https://onlinelibrary.wiley.com/doi/abs/10.1002/oca.2563},
eprint = {https://onlinelibrary.wiley.com/doi/pdf/10.1002/oca.2563},
abstract = {Summary We study a class of coupled nonlinear matrix differential equations arising in connection with the solution of a zero-sum two-player linear quadratic (LQ) differential game for a dynamical system modeled by an Itô differential equation subject to random switching of its coefficients. The system of differential equations under consideration contains as special cases the game-theoretic Riccati differential equations arising in the solution of the H∞ control problem from the deterministic and stochastic cases. A set of sufficient conditions that guarantee the existence of the bounded and stabilizing solution of this kind of Riccati differential equations is provided. We show how such stabilizing solution is involved in the construction of the equilibrium strategy of a zero-sum LQ stochastic differential game on an infinite-time horizon and give as a byproduct the solution of such a control problem.},
year = {2020}
}

@article{Lanzon2008,
  author   = {Lanzon, Alexander and Feng, Yantao and Anderson, Brian D. O. and Rotkowitz, Michael},
  journal  = {IEEE Transactions on Automatic Control},
  title    = {Computing the Positive Stabilizing Solution to Algebraic Riccati Equations With an Indefinite Quadratic Term via a Recursive Method},
  year     = {2008},
  volume   = {53},
  number   = {10},
  pages    = {2280-2291},
  abstract = { An iterative algorithm to solve Algebraic Riccati Equations with an indefinite quadratic term is proposed. The global convergence and local quadratic rate of convergence of the algorithm are guaranteed and a proof is given. Numerical examples are also provided to demonstrate the superior effectiveness of the proposed algorithm when compared with methods based on finding stable invariant subspaces of Hamiltonian matrices. A game theoretic interpretation of the algorithm is also provided. },
  keywords = {Riccati equations;Iterative algorithms;Convergence;Symmetric matrices;Iterative methods;Game theory;Australia Council;Control systems;Computational efficiency;Packaging;Algebraic Riccati equation (ARE);$H_{\infty}$  Riccati equations;indefinite quadratic term;iterative algorithms},
  doi      = {10.1109/TAC.2008.2006108},
  issn     = {1558-2523},
  month    = {11}
}

@inproceedings{Dragan2008,
  title     = {Iterative algorithms for stabilizing solutions of game theoretic Riccati equations of stochastic control},
  author    = {Dragan, V and Freiling, G and Morozan, T and Stoica, AM},
  booktitle = {Proceedings of the 18th International Symposium on Mathematical Theory of Networks \& Systems},
  pages     = {1--11},
  year      = {2008},
  abstract  = {We propose two iterative procedures for the computation of the stabilizing solution of coupled generalized algebraic Riccati equations with indefinite quadratic term-such equations appear in the solution of stochastic H∞ control problems. The method used in this paper is a generalization of the defect correction method introduced in [17] and extends to this framework the algorithm derived in [16] for game-theoretic (deterministic) algebraic Riccati equations.}
}

@article{Feng2010,
  abstract      = {An iterative algorithm to solve a kind of state-perturbed stochastic algebraic Riccati equation (SARE) in LQ zero-sum game problems is proposed. In our algorithm, we replace the problem of solving a SARE with an indefinite quadratic term by the problem of solving a sequence of SAREs with a negative semidefinite quadratic term, which can be solved by existing methods. Under some appropriate conditions, we prove that our algorithm is globally convergent. We give a numerical example to show the effectiveness of our algorithm. Our algorithm also has a natural game theoretic interpretation.},
  author        = {Feng, Yantao and Anderson, Brian D. O.},
  date          = {2010-01-01},
  date-added    = {2025-07-23 18:14:32 +0800},
  date-modified = {2025-07-23 18:14:32 +0800},
  doi           = {https://doi.org/10.1016/j.sysconle.2009.11.006},
  isbn          = {0167-6911},
  journal       = {Systems \& Control Letters},
  keywords      = {SARE; Iterative; Stochastic},
  number        = {1},
  pages         = {50--56},
  title         = {An iterative algorithm to solve state-perturbed stochastic algebraic Riccati equations in LQ zero-sum games},
  url           = {https://www.sciencedirect.com/science/article/pii/S0167691109001406},
  volume        = {59},
  year          = {2010},
  bdsk-url-1    = {https://www.sciencedirect.com/science/article/pii/S0167691109001406},
  bdsk-url-2    = {https://doi.org/10.1016/j.sysconle.2009.11.006}
}

@article{Dragan2011,
  abstract      = {In this paper, the problem of the numerical computation of the stabilizing solution of the game theoretic algebraic Riccati equation is investigated. The Riccati equation under consideration occurs in connection with the solution of the $H_{\infty}$ control problem for a class of stochastic systems affected by state dependent and control dependent white noise. The stabilizing solution of the considered game theoretic Riccati equation is obtained as a limit of a sequence of approximations constructed based on stabilizing solutions of a sequence of algebraic Riccati equations of stochastic control with definite sign of the quadratic part. The efficiency of the proposed algorithm is demonstrated by several numerical experiments.},
  author        = {Dragan, Vasile and Ivanov, Ivan G.},
  date          = {2011-07-01},
  date-added    = {2025-07-23 18:15:46 +0800},
  date-modified = {2025-07-23 18:15:46 +0800},
  doi           = {10.1007/s11075-010-9432-7},
  id            = {Dragan2011},
  isbn          = {1572-9265},
  journal       = {Numerical Algorithms},
  number        = {3},
  pages         = {357--375},
  title         = {Computation of the stabilizing solution of game theoretic Riccati equation arising in stochastic $H_{\infty}$ control problems},
  url           = {https://doi.org/10.1007/s11075-010-9432-7},
  volume        = {57},
  year          = {2011},
  bdsk-url-1    = {https://doi.org/10.1007/s11075-010-9432-7}
}

@ARTICLE{Dragan201501,
  author={Dragan, Vasile},
  journal={IMA Journal of Mathematical Control and Information}, 
  title={Stabilizing solution of periodic game-theoretic Riccati differential equation of stochastic control}, 
  year={2015},
  volume={32},
  number={4},
  pages={839-865},
  abstract={In this work, a class of game-theoretic Riccati differential equations with periodic coefficients occurring in connection with stochastic H∞ control problem is considered. An iterative algorithm for the computation of the stabilizing solution of this kind of Riccati differential equations with indefinite sign is proposed.},
  keywords={game-theoretic Riccati differential equations;periodic coefficients;stabilizing solution},
  doi={10.1093/imamci/dnu026},
  ISSN={1471-6887},
  month={12}
}

@article{Dragan201502,
  author   = {Dragan, Vasile and Aberkane, Samir and Ivanov, Ivan G.},
  title    = {On computing the stabilizing solution of a class of discrete-time periodic Riccati equations},
  journal  = {International Journal of Robust and Nonlinear Control},
  volume   = {25},
  number   = {7},
  pages    = {1066-1093},
  keywords = {periodic Riccati equations, indefinite sign, iterative computation},
  doi      = {https://doi.org/10.1002/rnc.3131},
  url      = {https://onlinelibrary.wiley.com/doi/abs/10.1002/rnc.3131},
  eprint   = {https://onlinelibrary.wiley.com/doi/pdf/10.1002/rnc.3131},
  abstract = {Summary This paper addresses the problem of solving a class of periodic discrete-time Riccati equation with an indefinite sign of its quadratic term. Such an equation is closely related to the so-called full-information H∞ control of discrete-time periodic systems. A globally convergent iterative algorithm with a local quadratic convergence rate is proposed for this purpose. An application to the problem of H∞ filtering of discrete-time periodic systems is also developed and illustrated via a numerical example. Copyright © 2013 John Wiley \& Sons, Ltd.},
  year     = {2015}
}

@article{Ivanov2015,
  title     = {On the iterative solution to $H_{\infty}$ control problems},
  author    = {Ivanov, Ivan G and Ivanov, Ivelin G and Netov, Nikolay C},
  journal   = {Applied Mathematics},
  volume    = {6},
  number    = {8},
  pages     = {1263--1270},
  year      = {2015},
  publisher = {Scientific Research Publishing},
  abstract  = {This paper addresses the problem for solving a Continuous-time Riccati equation with an indefinite sign of the quadratic term. Such an equation is closely related to the so called full information H∞ control of linear time-invariant system with external disturbance. Recently, a simultaneous policy update algorithm (SPUA) for solving H∞ control problems is proposed by Wu and Luo (Simultaneous policy update algorithms for learning the solution of linear continuous-time H∞ state feedback control, Information Sciences, 222, 472-485, 2013). However, the crucial point of their method is to find an initial point, which ensuring the convergence of the method. We will show one example where Wu and Luo's method is not effective and it converges to an indefinite solution. Three effective methods for computing the stabilizing solution to the considered equation are investigated. Computer realizations of the presented methods are numerically compared on the computational platforms MATLAB and SCILAB.}
}

@article{Dragan2017,
author = {Dragan, Vasile and Aberkane, Samir},
title = {Computing The Stabilizing Solution of a Large Class of Stochastic Game Theoretic Riccati Differential Equations: A Deterministic Approximation},
journal = {SIAM Journal on Control and Optimization},
volume = {55},
number = {2},
pages = {650-670},
year = {2017},
doi = {10.1137/15M1049038},
URL = {https://doi.org/10.1137/15M1049038},
eprint = {https://doi.org/10.1137/15M1049038},
abstract = { This paper addresses the problem of the numerical computation of the stabilizing solution of a large class of stochastic game theoretic Riccati differential equations. A globally convergent iterative algorithm is proposed for this purpose. The main idea behind the proposed algorithm is to solve at each main iteration a system of uncoupled deterministic \$H\_\infty\$-type Riccati equations. One of the main ingredients used in the proof of the convergence property is a new comparison theorem for this class of differential matrix equations. The performance of the proposed algorithm is illustrated through some numerical examples. }
}

@article{Ivanov2018,
	abstract = {We consider a generalized algebraic Riccati equation arising in stochastic control with an indefinite quadratic part. Three effective methods for computing a matrix sequence, which converges to the stabilizing solution of the considered type of Riccati equations with indefinite quadratic parts are explored. Convergence properties of these methods are studied. Computer realizations of the presented methods are numerically compared. Based on the experiments the main conclusion is that the Lyapunov iteration is faster than the Riccati iteration because these methods carry the same number of iterations. The iterative methods are numerically compared and investigated.},
	author = {Ivanov, Ivan G. and Ivanov, Ivelin G.},
	date = {2018-02-01},
	date-added = {2025-08-16 16:00:14 +0800},
	date-modified = {2025-08-16 16:00:14 +0800},
	doi = {10.1007/s12190-017-1086-3},
	id = {Ivanov2018},
	isbn = {1865-2085},
	journal = {Journal of Applied Mathematics and Computing},
	number = {1},
	pages = {547--559},
	title = {The iterative solution to LQ zero-sum stochastic differential games},
	url = {https://doi.org/10.1007/s12190-017-1086-3},
	volume = {56},
	year = {2018},
	bdsk-url-1 = {https://doi.org/10.1007/s12190-017-1086-3}
}

@article{Aberkane2023,
  author         = {Aberkane, Samir and Dragan, Vasile},
  title          = {A Deterministic Setting for the Numerical Computation of the Stabilizing Solutions to Stochastic Game-Theoretic Riccati Equations},
  journal        = {Mathematics},
  volume         = {11},
  year           = {2023},
  number         = {9},
  article-number = {2068},
  url            = {https://www.mdpi.com/2227-7390/11/9/2068},
  issn           = {2227-7390},
  abstract       = {In this paper, we are interested in the numerical aspects of the class of generalized Riccati difference equations which are involved in linear quadratic (LQ) stochastic difference games. More specifically, we address the problem of the numerical computation of the stabilizing solutions for this class of nonlinear difference equations. We propose an iterative deterministic algorithm for the computation of such a global solution. The performances of the proposed algorithm are illustrated with some numerical examples.},
  doi            = {10.3390/math11092068}
}

@book{Dragan2013book,
  author = {Dragan, Vasile and Toader, Morozan and Stoica, Adrian-Mihail},
  year   = {2013},
  month  = {01},
  title  = {Mathematical Methods in Robust Control of Linear Stochastic Systems},
  isbn   = {978-1-4614-8662-6},
  doi    = {10.1007/978-1-4614-8663-3}
}

@book{Halanay1994,
author="Halanay, Aristide
and Ionescu, Vlad",
title="Disturbance Attenuation",
bookTitle="Time-Varying Discrete Linear Systems: Input-Output Operators. Riccati Equations. Disturbance Attenuation",
year="1994",
publisher="Birkh{\"a}user Basel",
address="Basel",
pages="145--185",
abstract="This chapter may be viewed as the heart of this book, for it joins together almost all the results exposed in the previous chapters, with a special accent on the Popov-Yakubovich theory developed in Chapter 3. In fact, what will follow is, in a way, the time-variant (discrete) version of the H∞ theory whose natural framework is the time-invariant case. As is well known, the H∞ theory has been deeply investigated in the last decade and many mathematical tools proved efficiency in solving different aspects of the cited theory. Our option here concerns the game-theoretic situation directly derived, as a particular case, from the Popov-Yakubovich-like result presented in Theorem 3.2.2. Such a result invokes an operator based approach which, in our opinion, provides better understanding of the structural aspects of the solution to the so-called disturbance attenuation problem, as well as easier ways for deriving the formulae. In fact our motivation in developing the subsequent theory was the following. Given a (generalized) time-variant discrete system, assume that a stabilizing controller exists such that the resultant closed-loop input-output operator has its norm bounded by a prescribed positive number $\gamma$, that is, such a controller provides $\gamma$-disturbance attenuation. Starting with this general hypothesis and taking into consideration a minimal set of initial assumptions made on the given system, our major objective consists of deriving ``as much as possible'' necessary conditions expressed in a very suitable form, i.e. by means of the Kalman-Szeg{\"o}-Popov-Yakubovich systems. Such expression of the necessary conditions, which turn out to be also sufficient, points out a striking fact: the existence of a stabilizing controller that simultaneously provides $\gamma$-disturbance attenuation has remarkable implications concerning the existence of solutions to some nonlinear system, in fact as Kalman-Szeg{\"o}-Popov-Yakubovich system. Finally, it is worthwhile emphasizing that our approach can be viewed also as a Popov-Yakubovich version for solving (indirectly) a general Nehari problem.",
isbn="978-3-0348-8499-0",
doi="10.1007/978-3-0348-8499-0_5",
url="https://doi.org/10.1007/978-3-0348-8499-0_5"
}

@misc{wang2025,
      title={A New Algorithm for Computing the Stabilizing Solution of General Periodic Time-Varying Stochastic Game-Theoretic Riccati Differential Equations}, 
      author={Yiyuan Wang},
      year={2025},
      eprint={2511.03390},
      archivePrefix={arXiv},
      primaryClass={math.NA},
      url={https://arxiv.org/abs/2511.03390}, 
}

\end{document}